\theoremstyle{plain}
\newtheorem{Thm}{Theorem}
\newtheorem{Lem}[Thm]{Lemma}
\theoremstyle{definition}
\newtheorem{Def}[Thm]{Definition}
\newcommand{\comment}[1]{}
\begin{document}

\title[Automorphisms of the three-torus]{Automorphisms of the three-torus preserving a genus three Heegaard splitting.}

\author{Jesse Johnson}
\address{\hskip-\parindent
        Department of Mathematics \\
        Yale University \\
        PO Box 208283 \\
        New Haven, CT 06520 \\
        USA}
\email{jessee.johnson@yale.edu}

\subjclass{Primary 57M}
\keywords{Heegaard splitting, 3-torus, mapping class group}

\thanks{Research supported by NSF MSPRF grant 0602368}

\begin{abstract}
The mapping class group of a Heegaard splitting is the group of connected components in the set of automorphisms of the ambient manifold that map the Heegaard surface onto itself.  For the genus three Heegaard splitting of the 3-torus, we find an eight element generating set for this group.  Six of these generators induce generating elements of the mapping class group of the 3-torus and the remaining two are isotopy trivial in the 3-torus.  
\end{abstract}

\maketitle

\section{Introduction}
Given a 3-manifold $M$ and a Heegaard splitting $(\Sigma, H_1, H_2)$ of $M$, consider the set $Aut(M)$ of orientation preserving automorphisms $M \rightarrow M$.  The set of connected components of $Aut(M)$ forms a group $Mod(M)$ called the mapping class group.  We will define $Aut(M, \Sigma)$ to be the subset of $Aut(M)$ consisting of maps that send $\Sigma$ onto itself.  The set of connected components of $Aut(M, \Sigma)$ again forms a group, which we will denote $Mod(M, \Sigma)$.  

The 3-torus, $T^3 = S^1 \times S^1 \times S^1$ is known to have a unique (up to isotopy) genus three Heegaard splitting $(\Sigma, H_1, H_2)$~\cite{3torus}.  The mapping class group $Mod(T^3)$ is isomorphic to $SL(3, \mathbf{Z})$.  However, in $Mod(T^3, \Sigma)$ there are automorphisms that are non-trivial on $\Sigma$ but are isotopy trivial in $T^3$.  Thus $Mod(T^3,\Sigma)$ is in some sense much larger than $Mod(T^3)$.  We will prove the following:

\begin{Thm}
\label{mainthm}
For $(\Sigma, H_1, H_2)$ a (standard) genus three Heegaard splitting of $T^3$, $Mod(T^3, \Sigma)$ is generated by the automorphisms $\alpha_{12}$, $\alpha_{21}$, $\alpha_{13}$, $\alpha_{31}$, $\alpha_{23}$, $\alpha_{32}$, $\sigma$ and $\tau$ defined in Sections~\ref{mcgsect} and~\ref{gensect}.
\end{Thm}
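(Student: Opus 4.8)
The plan is to study the forgetful homomorphism $\phi\colon Mod(T^3,\Sigma) \to Mod(T^3) \cong SL(3,\mathbf{Z})$ which records the action of an automorphism on $H_1(T^3;\mathbf{Z})$, and to treat its image and its kernel separately. Using the descriptions in Section~\ref{mcgsect}, I would first check that $\phi(\alpha_{ij})$ is the elementary transvection $E_{ij}$. Since the six matrices $E_{ij}$ (for $i\neq j$) generate $SL(3,\mathbf{Z})$, the subgroup $\langle\alpha_{12},\alpha_{21},\alpha_{13},\alpha_{31},\alpha_{23},\alpha_{32}\rangle$ already surjects onto $SL(3,\mathbf{Z})$; hence $\phi$ is onto and $Mod(T^3,\Sigma)=\langle\alpha_{12},\dots,\alpha_{32}\rangle\cdot\ker\phi$. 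It therefore suffices to prove that $\ker\phi$ is contained in the group generated by all eight elements, and for this it is enough to show that $\ker\phi$ is \emph{normally} generated in $Mod(T^3,\Sigma)$ by $\sigma$ and $\tau$: a conjugate $w g w^{-1}$ with $g\in\{\sigma^{\pm1},\tau^{\pm1}\}$ can be rewritten by replacing $w$ by a word in the $\alpha_{ij}$ times an element of $\ker\phi$, and one iterates.

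Next I would analyse $\ker\phi$ itself, i.e.\ automorphisms of $T^3$ isotopic to the identity that carry $\Sigma$ to itself. Such an automorphism either preserves each of $H_1,H_2$ or interchanges them; the orientation-preserving interchange is realised by the half-lattice translation $x\mapsto x+(\tfrac12,\tfrac12,\tfrac12)$, which lies in $\ker\phi$ and which I would exhibit as a word in the eight generators, so after composing with it we may assume $f(H_1)=H_1$. Since a homeomorphism of a handlebody is determined up to isotopy by its restriction to the boundary, such an $f$ is determined up to isotopy by $f|_\Sigma$, so the handlebody-preserving part of $\ker\phi$ embeds in the Goeritz group $\mathcal{G}$ of the splitting (mapping classes of $\Sigma$ extending to homeomorphisms of both $H_1$ and $H_2$); the task becomes to identify the subgroup of $\mathcal{G}$ whose extensions to $T^3$ act trivially on $H_1(T^3)$ and to show it is normally generated by $\sigma$ and $\tau$.

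For this I would fix the standard system of three meridian disks of $H_1$ (dually, a standard system of product annuli or level tori adapted to the factorisation $T^3=S^1\times S^1\times S^1$) and argue by induction on a complexity such as the number of intersection curves of $f$ applied to this system with the original one. The homological condition $\phi(f)=I$, together with the requirement that $f$ also extend over $H_2$, restricts which innermost-disk and outermost-arc surgeries occur, and the claim is that whenever the complexity is positive one can strictly decrease it after multiplying $f$ by a suitable $\sigma^{\pm1}$, $\tau^{\pm1}$, or an $\alpha_{ij}$-conjugate of one of these. At minimal complexity $f$ preserves the standard configuration up to isotopy and so is one of a short explicit list of model symmetries, each of which I would write directly as a word in the eight generators, completing the proof.

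I expect the last step to be the main obstacle: one must keep the homological constraint and the two handlebody-extension constraints simultaneously under control through the disk-surgery argument, and rule out the sporadic minimal-complexity configurations that could a priori survive. This is precisely where the special geometry of the unique genus-three splitting of $T^3$ — realised as a regular neighbourhood of the $1$-skeleton of the cubical cell structure, with its threefold coordinate symmetry — must be used in an essential way rather than just the abstract handlebody structure.
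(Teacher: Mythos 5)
Your first reductions coincide with the paper's: the $\alpha_{ij}$ surject onto $SL(3,\mathbf{Z})$ under the forgetful map, and composing with $\sigma$ handles the handlebody-swapping case, so everything comes down to the kernel of $i$. But from that point on your argument has a genuine gap. The entire difficulty of the theorem is the claim that a handlebody-preserving automorphism that is isotopically trivial in $T^3$ lies in the group generated by the eight elements, and your proposal for this --- an induction on the intersection complexity of the image of a standard disk system, decreasing complexity by multiplying by $\sigma^{\pm1}$, $\tau^{\pm1}$ or conjugates --- is only asserted, not proved; you acknowledge yourself that the key inductive step is ``the main obstacle.'' There is no known surgery-type argument of Goeritz/Scharlemann flavor that works here: the paper points out that the purported extensions of Goeritz's method beyond genus two are fatally flawed, and it is precisely to avoid such an induction that the author develops a different mechanism. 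The paper instead fixes the defining pair $D_1^1, D_2^1$, observes that $\phi$ carries it to another defining pair determining an isotopic incompressible torus, and invokes Lemma~\ref{twistslem1}: any two defining pairs with isotopic associated tori are connected by a sequence of torus twists along tori that cut $\Sigma$ into planar pieces and meet the fixed torus in one loop. That lemma is proved by sweeping out $T^3$ with a Morse function and comparing it against the torus-bundle map via Rubinstein--Scharlemann-type graphics of stable maps (Sections~\ref{grndisksect}--\ref{isotopysect}), using the uniqueness of the defining pair (Lemma~\ref{uniquepairlem}) to control how the induced pair changes at non-generic graphics. Lemma~\ref{conjtwistlem} then conjugates each such torus twist to $\tau$ inside $G$, and a final argument on the twice-punctured tori cut off by $\partial D_1^1\cup\partial D_2^1$ finishes. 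Your sketch contains no substitute for Lemmas~\ref{conjtwistlem}--\ref{twistslem1}, so the heart of the proof is missing.

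A secondary logical problem: your reduction ``it suffices to show $\ker i$ is normally generated by $\sigma$ and $\tau$'' is not justified as stated. Rewriting $wgw^{-1}$ with $w = u k$, $u$ a word in the $\alpha_{ij}$ and $k \in \ker i$, leaves you with $k g k^{-1}$, whose expression in the generators is exactly what you are trying to establish, and the proposed iteration has no termination argument. The paper avoids this by producing torus twists that are conjugate to $\tau$ \emph{by elements already known to lie in $G$} (via the explicit rotations $r_{ij}$ and Lemma~\ref{conjtwistlem}), which is a strictly stronger and genuinely needed statement.
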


This is the first non-trivial example of a genus three Heegaard splitting or of an irreducible Heegaard splitting for which a finite generating set can be explicitly described.  Goeritz~\cite{goer:s3} found a finite generating set for the mapping class group of the genus two Heegaard splitting of $S^3$.  Scharlemann~\cite{schar:mcg} recently published a new proof of this result after discovering that the two purported proofs of the higher genus cases are fatally flawed.  Akbas~\cite{akbas} later found a finite presentation for this group and shortly afterwards, Cho~\cite{cho} presented a new proof that this presentation is correct.

For genus one Heegaard splittings of lens spaces, the mapping class group is finite and easy to understand.  For minimal Heegaard splittings of connect sums of $S^1 \times S^2$, the mapping class group of the Heegaard splitting is isomorphic to the mapping class group of a handlebody, and is thus understood.  These two classes and the genus two Heegaard splitting of $S^3$ are the only previously understood examples.

The problem of understanding mapping class groups of Heegaard splittings is equivalent to problems in algebra and geometry:  Algebraically, the mapping class group is the intersection of the subgroups of $Mod(\Sigma)$ that extend into the two handlebodies.  These two subgroups are conjugates (by the gluing map) and each is finitely generated, but from the algebraic view point there appears to be no method for calculating their intersection in general.  

Geometrically, for genus greater than two, the mapping class group is the group of automorphisms of the curve complex preserving two handlebody sets.  Because the large scale geometry of the curve complex is understood, this point of view is useful for high distance Heegaard splittings.  (See~\cite{nam:mcg}.)  However, the local geometry of the curve complex is not well behaved, making this a difficult problem for low distance Heegaard splittings.  By appealing to the topology of $T^3$, we can solve this problem for this one case.  It seems that in general, solving this problem in the 3-manifold setting should be more reasonable than the equivalent problems in algebra and geometry.

\section{The mapping class group}
\label{mcgsect}

A \textit{Heegaard splitting} for a 3-manifold $M$ is a triple $(\Sigma, H_1, H_2)$ where $H_1, H_2 \subset M$ are handlebodies (connected manifolds homeomorphic to closed regular neighborhoods of graphs in $S^3$) and $\Sigma$ is a compact, connected, closed and orientable surface embedded in $M$ such that $H_1 \cup H_2 = M$ and $\partial H_1 = \Sigma = \partial H_2 = H_1 \cap H_2$.  

As defined above, $Mod(M, \Sigma)$ is the group of equivalence classes of orientation preserving automorphisms of $M$ that take $\Sigma$ onto itself.  Two automorphisms are equivalent if there is an isotopy from one to the other by automorphisms of $M$ that take $\Sigma$ onto itself.

Each connected component of $Aut(M, \Sigma)$ is a subset of a connected component of $Aut(M)$, so the inclusion map in $Aut(M)$ determines a canonical homomorphism $i : Mod(M, \Sigma) \rightarrow Mod(M)$.  In other words, $i$ is induced by ``forgetting'' $\Sigma$ and considering each element of $Mod(M, \Sigma)$ as an automorphism of $M$.  For further discussion of this homomorphism, see~\cite{finitemcg}.  The kernel of $i$ is the subgroup of $Mod(M,\Sigma)$ consisting of automorphisms that are isotopy trivial on $M$ but whose restrictions to $\Sigma$ are not isotopy trivial.

Consider $\mathbf{R}^3$ with axes labeled $x_1$,$x_2$,$x_3$.  Let $T_1$, $T_2$, $T_3$ be isometries of $\mathbf{R}^3$ where $T_i$ is translation by 1 unit along the axis $x_i$.  We can think of $T^3$ as the quotient of $\mathbf{R}^3$ by the group generated by $T_1$, $T_2$, $T_3$.  Each automorphism of $T^3$ lifts to an automorphism of $\mathbf{R}^3$.  Within the isotopy class for this automorphism, there is a representative that lift to $\mathbf{R}^3$ such that the automorphism of $\mathbf{R}^3$ fixes the origin.  It sends the vectors $(1,0,0)$, $(0,1,0)$, $(0,0,1)$ to integral vectors $v_1$, $v_2$, $v_3$, respectively.  Thus an automorphism of $T^3$ determines an element of the (integral) matrix group $GL_3(\mathbf{Z})$.

The matrix determined by an automorphism is unique and an element of $GL_3(\mathbf{Z})$ is represented by an automorphism of $T^3$ if and only if its determinant is one.  Thus $Mod(T^3)$ is isomorphic to the group $SL_3(\mathbf{Z})$ of three by three integral matrices with determinant one.  This group is generated by the six automorphisms that send $x_i$ to $x_i + x_j$ for $i \neq j$.  Let $A_{ij}$ be this automorphism for each distinct pair $i,j \in \{1,2,3\}$.

We would like to construct a Heegaard splitting for $T^3$ that fits naturally into the picture of the 3-torus described above.  Let $K_1 \subset T^3$ be the image in $T^3$ of the three edges in $\mathbf{R}^3$ from the origin to $(1,0,0)$, $(0,1,0)$ and $(0,0,1)$, respectively.  This $K_1$ is a graph with a single vertex and three edges.  Let $K_2$ be the image in $T^3$ of the same three edges, translated by the vector $(\frac{1}{2},\frac{1}{2},\frac{1}{2})$.  This is again a graph with one vertex and three edges.

Let $H_1$ be the set of points in $T^3$ whose distance to $K_1$ (in the euclidean metric on $T^3$) is less than or equaly to their distance to $K_2$.  Let $H_2$ be the set of points closer to $K_2$ and let $\Sigma$ be the set of points equidistant to $K_1$ and $K_2$.  Each of $H_1$, $H_2$ is the closure of a regular neighborhood of $K_1$, $K_2$, respectively so each set is a handlebody.  Moreover, $\Sigma$ is the boundary of each handlebody so $(\Sigma, H_1, H_2)$ is a (genus three) Heegaard splitting for $T^3$, shown in Figure~\ref{splittingfig}. (In the figure it is drawn as a smooth surface, though the way it's defined it is piecewise-linear.)   Boileau and Otal~\cite{3torus} showed that two Heegaard splittings of $T^3$ are isotopic if they have the same genus, so $(\Sigma, H_1, H_2)$ is the \textit{standard} (up to isotopy) genus three Heegaard splitting of $T^3$.
\begin{figure}[htb]
  \begin{center}
  \includegraphics[width=1.5in]{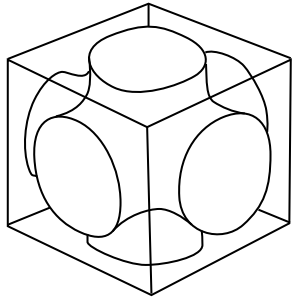}
  \caption{Gluing opposite faces of the cube by translations forms $T^3$.  The image in $T^3$ of the six-times punctured sphere shown here is the standard Heegaard splitting for $T^3$.}
  \label{splittingfig}
  \end{center}
\end{figure}

The parametrization of $\mathbf{R}^3$ induces an orientation on each edge of $K_1$, i.e. we have each edge point in the increasing direction along its axis.
Choose distinct $i,j,k \in \{1,2,3\}$.  Applying the transformation $A_{ij}$
to $T^3$ sends the edges of $K_1$ along the $x_j$ and $x_k$ axes onto themselves and sends the edge along the $x_i$ axis to the diagonal of a square in the $x_i$-$x_j$ plane.  Isotope this diagonal across the square into a neighborhood of the original spine so that it first passes along the edge in the $x_j$ axis and then along the edge in the $x_i$ axis.  

There is a unique element $\alpha_{ij}$ of $Mod(T^3,\Sigma)$ that maps to $A_{ij}$ in $Mod(T^3)$ and sends $K_1$ onto the new spine defined above. If we had chosen to slide the diagonal edge in the opposite direction across the square, we would have gotten a different element of $Mod(T^3,\Sigma)$.  Because each $\alpha_{ij}$ maps to $A_{ij}$, the images in $i$ of $\{\alpha_{ij}\}$ generate $Mod(T^3)$.  In order to extend this set of elements to a generating set for $Mod(T^3, \Sigma)$, we must understand the kernel of $i$.

\section{The kernel}
\label{gensect}

In this section, we will define elements $\sigma$ and $\tau$ of $Mod(T^3, \Sigma)$ that are non-trivial in $Mod(T^3,\Sigma)$, but isotopy trivial in $Mod(T^3)$.  First consider the translation of $\mathbf{R}^3$ by the vector $(\frac{1}{2}, \frac{1}{2}, \frac{1}{2})$.   This descends to an automorphism $\sigma$ of $T^3$ that sends the handlebody $H_1$ onto $H_2$ and sends $H_2$ onto $H_1$.  The translation of $\mathbf{R}^3$ is isotopic to the identity by a family of translations, inducing an isotopy of $T^3$ taking $\sigma$ to the identity.  Thus $\sigma$ is in the kernel of $i$.  Note that there are many automorphisms that are isotopic to the identity and swap the two handlebodies.  For our purposes, we could have picked any of these, but $\sigma$ happens to be the easiest to define.  (Note that $\sigma$ has order two in $Mod(T^3,\Sigma)$.)

The second automorphism in the kernel that we will define is called a torus twist and can be defined  on Heegaard splittings in a large family of manifolds.   Let $D_1 \subset H_1$ and $D_2 \subset H_2$ be properly embedded, essential disks such that the intersection of $\partial D_1$ and $\partial D_2$ is exactly two points.  We will assume that the algebraic intersection number of $\partial D_1$ and $\partial D_2$ is zero, though this is not necessary in general.  The author and Hyam Rubinstein~\cite{finitemcg} showed that $D_1$ and $D_2$ determine an element of the kernel of $i$ as follows:

Let $N$ be the closure of a regular neighborhood of $D_1 \cup D_2$.  Because there are two points of intersection, $N$ is a solid torus, as in Figure~\ref{localfig}.  (On the left, the disk $D_2$ is shown cut in half, at the top and bottom of the figure.)  Because the orientations at the two intersections are opposite, the surface $\Sigma \cap N$ is a four punctured sphere whose boundary consists of four simple closed curves in $\partial N$.   
\begin{figure}[htb]
  \begin{center}
  \includegraphics[width=3.5in]{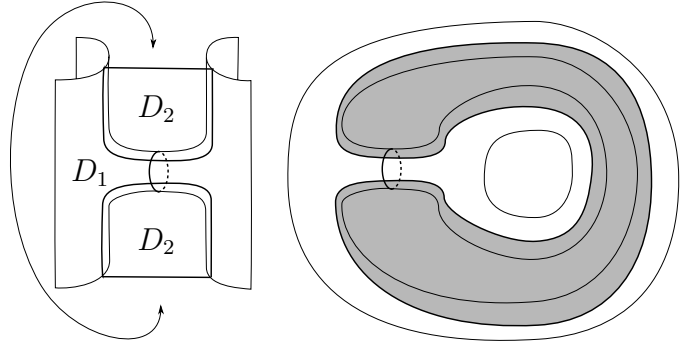}
  \put(-205,85){$D_2$}
  \put(-205,35){$D_2$}
  \put(-230,61){$D_1$}
  \caption{A neighborhood of the two disks is a solid torus.}
  \label{localfig}
  \end{center}
\end{figure}

The loops $\Sigma \cap \partial N$ are parallel longitudes in $\partial N$ so ``spinning'' $N$ along its longitude induces an automorphism of the solid torus $N$ that fixes the boundary of $N$ and takes $\Sigma \cap N$ onto itself.  This induces an automorphism of $\Sigma$ consisting of Dehn twists along loops parallel to $\Sigma \cap \partial N$.  We will call such an automorphism a \textit{torus twist}.  

Note that any conjugate of a torus twist is a torus twist.  We will choose a specific pair of disks and show that the kernel of $i$ is generated by torus twists that are conjugates of this fixed torus twist.

For each $i \in \{1,2,3\}$, the image in $T^3$ of the plane $\{(x_1,x_2,x_3) | x_i = 0\}$ intersects $H_2$ in a properly embedded, essential disk $D_2^i$ and the image of the plane $\{(x_1,x_2,x_3) | x_i = \frac{1}{2}\}$ intersects $H_1$ in a disk $D_1^i$.  For each $i$, $j$ the disks $D_1^i$ and $D_2^j$ are disjoint when $i = j$ and intersect in two points when $i \neq j$.  When $i \neq j$, a regular neighborhood of $D_1^i \cup D_2^j$ is a solid torus parallel to the $x_k$ axis ($k \neq i,j$) and we can twist in the positive direction along this axis.  Thus for each choice of distinct $i$ and $j$, $D_1^i$ and $D_2^j$ define a torus twist $\tau_{ij}$.  One of these is shown in Figure~\ref{twistorusfig}.
\begin{figure}[htb]
  \begin{center}
  \includegraphics[width=1.5in]{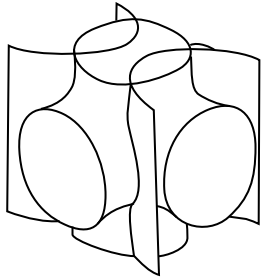}
  \caption{The torus defining an automorphism $\tau_{ij}$.}
  \label{twistorusfig}
  \end{center}
\end{figure}

For $i \neq j$, a regular neighborhood of $D_1^i \cup D_2^j$ intersects $\Sigma$ in loops parallel to the loops of intersection with a neighborhood of $D_1^j \cup D_2^i$.  However, twisting in the positive direction around the first soid torus induces oppositely oriented Dehn twists along these loops than twisting in the positive direction along the second solid torus.  Thus $\tau_{ij} = \tau_{ji}^{-1}$.  For our last generator, define $\tau = \tau_{12}$.

\section{Torus Twists}
\label{tortwistsect}

Let $G \subset Mod(M,\Sigma)$ be the subgroup generated by $\{\alpha_{ij}, \sigma, \tau\}$.  In order to show that $G = Mod(M,\Sigma)$, we will show first that the kernel of $i$ is generated by a certain class of torus twists and second that these torus twists are all conjugate to $\tau$ by elements of $G$.  We will begin by defining some relations on elements of $G$.  Define $r_{ij} = \alpha_{ij} \alpha^{-1}_{ji} \alpha_{ij} \tau_{jk}$.  The reader can check the following:

\begin{Lem}
Let $i,j,k \in \{1,2,3\}$ be distinct.  Then the automorphism $r_{ij}$ takes the image in $T^3$ of the $i$-axis onto the image of the $j$ axis and takes the $j$ axis onto the negative $i$ axis.
\end{Lem}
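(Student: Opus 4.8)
The plan is to track the action of $r_{ij}$ on the spine $K_1$ --- equivalently, on the free generating set $\{e_1,e_2,e_3\}$ of $\pi_1(H_1)$ given by the three edges of $K_1$, each oriented in the increasing coordinate direction --- and to check that $e_i$ is carried to a loop freely isotopic to the $j$-axis, $e_j$ to the reversed $i$-axis, and $e_k$ to itself. All of the maps involved preserve $\Sigma$, hence $H_1$, so each induces an automorphism of $\pi_1(H_1)\cong F_3$, and the lemma amounts to computing the composite automorphism and recognizing the resulting words.

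First I would record the action of the Nielsen-type factors. By the construction in Section~\ref{mcgsect}, $\alpha_{ij}$ replaces the $i$-edge of $K_1$ by the arc running first along the $j$-edge and then along the $i$-edge, fixing the other two edges; so $(\alpha_{ij})_\ast$ is the automorphism $e_i\mapsto e_je_i,\ e_j\mapsto e_j,\ e_k\mapsto e_k$, and, swapping the roles of $i$ and $j$ and inverting, $(\alpha_{ji}^{-1})_\ast$ is $e_i\mapsto e_i,\ e_j\mapsto e_i^{-1}e_j,\ e_k\mapsto e_k$. A short free-group computation then gives that the triple composite $\alpha_{ij}\alpha_{ji}^{-1}\alpha_{ij}$ acts by $e_i\mapsto e_i^{-1}e_je_i,\ e_j\mapsto e_i^{-1},\ e_k\mapsto e_k$. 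Abelianizing recovers the elementary-matrix identity (the rotation $e_i\mapsto e_j$, $e_j\mapsto -e_i$, $e_k\mapsto e_k$ of $SL_3(\mathbf{Z})$), which already pins down the image of $r_{ij}$ in $Mod(T^3)$ since $\tau_{jk}\in\ker i$; but on the spine the image of the $i$-axis is so far only a \emph{conjugate} of the $j$-axis, not the $j$-axis on the nose.

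The conjugation is corrected by the torus twist. The solid torus defining $\tau_{jk}$ is a neighborhood of $D_1^j\cup D_2^k$, it is parallel to the $x_i$-axis, and $D_1^j$ is the meridian disk of $H_1$ dual to the edge $e_j$; reading the sign off the local model in Figure~\ref{twistorusfig} (and Figure~\ref{localfig}), $(\tau_{jk})_\ast$ conjugates $e_j$ by $e_i$ and fixes $e_i$ and $e_k$. Composing this with the previous automorphism --- in whichever order the paper's notational convention dictates --- turns $e_i^{-1}e_je_i$ into $e_j$ while leaving the images of $e_j$ and $e_k$ unchanged, so $(r_{ij})_\ast$ is exactly $e_i\mapsto e_j,\ e_j\mapsto e_i^{-1},\ e_k\mapsto e_k$. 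Since each image is a single generator or its inverse, the corresponding edge of $r_{ij}(K_1)$ is an unknotted pushoff of the named coordinate axis and is therefore isotopic to it in $T^3$ (a remark worth making explicitly, but essentially immediate here); this gives the statement.

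The main obstacle I expect is pinning down the action of $\tau_{jk}$ on $\pi_1(H_1)$. Because $\tau_{jk}$ is isotopy trivial in $T^3$, its effect is invisible in homology, so identifying which handle it slides, around which loop, and with which sign has to be extracted from the local picture rather than from algebra; getting this conjugating element exactly right (it must be conjugation by $e_i$, not $e_i^{-1}$, and it must fix $e_k$ even though $e_k$ is linked with $D_2^k$) is the delicate point. By comparison, the composition of the three Nielsen automorphisms is routine bookkeeping, though one still has to be consistent about the order of composition and about the left/right convention for concatenating loops, since both choices flip the conjugating elements that appear along the way.
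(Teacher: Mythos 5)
The paper offers no argument for this lemma at all (it is explicitly left to the reader), so there is no proof to compare against; your plan is the natural one and, as far as I can check, every formula you assert is correct. The Nielsen bookkeeping is right: with the paper's description of $\alpha_{ij}$ ("first along the $x_j$ edge, then along the $x_i$ edge") one gets $e_i\mapsto e_je_i$, and the triple composite acts by $e_i\mapsto e_i^{-1}e_je_i$, $e_j\mapsto e_i^{-1}$, $e_k\mapsto e_k$. Your identification of the delicate point is also the right one, and your claimed answer for the twist is correct: since the twist is supported in the solid torus $N=N(D_1^j\cup D_2^k)$, and $K_1\cap N$ is a single arc of $e_j$ crossing $N$, the edges $e_i,e_k$ are fixed pointwise; the twist (being a Dehn twist along $\partial N$ in the longitude direction, supported in a collar of $\partial N$) inserts one longitudinal wrap where $e_j$ enters the collar and an oppositely oriented wrap where it exits, and these two insertions lie in the two different annuli of $\partial N\cap H_1$ (one running under $D_2^k$, one over it). Reading the two inserted loops against the meridian disks $D_1^1,D_1^2,D_1^3$ shows they are exactly $e_i$ and $e_i^{-1}$ as based loops, so $(\tau_{jk})_*$ is precisely $e_j\mapsto e_ie_je_i^{-1}$ (for the positive twist direction), which is what your argument needs.

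Two small points to tighten. First, the order of composition is not optional: since the triple $\alpha$-composite sends $e_i\mapsto e_i^{-1}e_je_i$ and $\tau_{jk}$ fixes $e_i$, the product $\alpha_{ij}\alpha_{ji}^{-1}\alpha_{ij}\tau_{jk}$ only yields $e_i\mapsto e_j$ if $\tau_{jk}$ is applied \emph{after} the $\alpha$'s (left-to-right reading) and with the sign $e_j\mapsto e_ie_je_i^{-1}$; with the opposite convention neither sign of the twist works, so you should state this rather than hedge. Second, your parenthetical worry that $e_k$ is ``linked with $D_2^k$'' is misplaced: $D_2^k$ is dual to the $K_2$-edge parallel to the $x_k$-axis, and $e_k$ is disjoint from the twisting solid torus altogether, which is why it is fixed on the nose. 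Finally, since the asserted action of $\tau_{jk}$ on $\pi_1(H_1)$ is the crux and is exactly what the lemma's truth hinges on, a complete write-up should include the short support-and-collar argument sketched above rather than citing the figures alone.
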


Because $\tau = \tau_{12} = \tau^{-1}_{21}$, the rotations $r_{31} = r^{-1}_{13}$ and $r_{32} = r^{-1}_{31}$ are in $G$.  Moreover, we have that $\tau_{13} = r_{23} \tau_{12} r^{-1}_{23}$ so $\tau_{13}$ is in $G$, as is $r_{12}$ and therefore $\tau_{23}$.  Thus the torus twists $\{t_{ij}\}$ are all contained in $G$.

Let $S$ be the image in $T^3$ of the plane $\{x_1,x_2,x_3 | x_3 = 0\}$ in $\mathbf{R}^3$.  Let $\phi \in Mod(M,\Sigma)$ be a torus twist defined by disks $D_1 \subset H_1$ and $D_2 \subset H_2$.  Let $T$ be th boundary of a regular neighborhood of $D_1 \cup D_2$, i.e. the torus that defines the twist.  Assume $S$ and $T$ are transverse.

\begin{Lem}
\label{conjtwistlem}
If $T \cap S$ is connected and essential in $T$ and $\Sigma \setminus T$ is planar then $\phi$ is conjugate to $\tau$ by elements of $G$ (and thus $\phi$ is in $G$).
\end{Lem}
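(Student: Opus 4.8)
The plan is to reduce the lemma to showing that the twisting torus $T$ is carried, by an element of $G$ together with a $\Sigma$-preserving isotopy of $T^3$, to the standard twisting torus of $\tau=\tau_{12}$, namely the boundary $T_{12}=\partial N_{12}$ of a regular neighbourhood of $D_1^1\cup D_2^2$. Since a torus twist is determined up to inversion by its twisting torus (the only remaining choice being the direction of spinning), once $g\in G$ carries $T_{12}$ to $T$ by such an isotopy one gets $g\tau g^{-1}=\phi^{\pm1}$. Moreover $\tau^{-1}=\tau_{21}$ is already conjugate to $\tau=\tau_{12}$ in $G$: because the $\alpha_{ij}$ surject onto $SL_3(\mathbf{Z})$, $G$ contains an element realizing a coordinate permutation-and-sign-change that interchanges the defining disks $D_1^1,D_2^2$ of $\tau_{12}$ with those of $\tau_{21}$, and by the relations established above all the $\tau_{ij}$ lie in $G$ and are mutually conjugate there. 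So it suffices to prove $\phi^{\pm1}$ is conjugate in $G$ to one of the $\tau_{ij}$.

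First I would normalize $T$ against $S$. The torus $S$ is incompressible and non-separating in $T^3$, and cutting $T^3$ along $S$ yields $T^2\times I$. By hypothesis $S\cap T$ is a single loop $c$, essential in $T$, so cutting $T$ along $c$ produces a single properly embedded annulus $A\subset T^2\times I$ with one boundary component, a copy of $\bar c\subset T^2$, in each end. A standard argument in the irreducible manifold $T^2\times I$ shows $A$ is isotopic, rel the slope of its boundary, to the vertical annulus $\bar c\times I$; regluing, $T$ is isotopic in $T^3$ to $\bar c\times S^1_{x_3}$. But $T=\partial N$ bounds a solid torus, hence is compressible, which forces the embedded curve $\bar c$ to bound a disk $E\subset S$; therefore $T$ is isotopic in $T^3$ to $\partial(E\times S^1_{x_3})$, a thin torus about the $x_3$-axis, i.e. to $T_{12}$, with the solid torus $N$ corresponding to $E\times S^1_{x_3}$.

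Next I would upgrade this topological isotopy to one compatible with $\Sigma$, which is where the planarity hypothesis enters. Since $\Sigma\cap N$ is the standard four-punctured sphere and $\Sigma\setminus N$ is planar with four boundary curves (the four parallel longitudes of $\partial N$, pushed off), an Euler-characteristic count — $\chi(\Sigma)=-4$, $\chi(\Sigma\cap N)=-2$, so $\chi(\Sigma\setminus N)=-2$ — forces $\Sigma\setminus N$ to be a connected four-punctured sphere as well, exactly as for $T_{12}$. Thus in both the $T$ and the $T_{12}$ cases, $\Sigma$ meets each of the two solid tori bounded by the torus in a four-punctured sphere whose four boundary curves are parallel longitudes, and the two meridian discs to $T$ in those solid tori cut $T^3$ into a ball. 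Running this standardization through the isotopy of the previous paragraph, and correcting by elements of $G$ that realize permutations and sign changes of the coordinate axes (these normalize the coordinate tori $S_i=\{x_i=0\}$ and act transitively on the finitely many resulting configurations), I would identify $(T^3,\Sigma,T)$ with its framing data with $(T^3,\Sigma,T_{12})$ up to inversion of the twisting sense, which by the first paragraph finishes the argument.

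The main obstacle is precisely this last step: promoting the bare three-dimensional isotopy $T\simeq T_{12}$ to one carrying $\Sigma$ onto $\Sigma$ and matching twisting senses. Concretely one must rule out that the four-punctured sphere $\Sigma\setminus N$ is ``knotted or linked'' inside the complementary solid torus $T^3\setminus N$ in a way inequivalent, under $G$, to how $\Sigma\setminus N_{12}$ sits in $T^3\setminus N_{12}$. This is the point at which the explicit structure of the standard genus-three splitting and the surjectivity of $\{\alpha_{ij}\}$ onto $SL_3(\mathbf{Z})$ must both be used, rather than any purely topological fact about tori in $T^3$.
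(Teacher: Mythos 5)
Your proposal does not close the step that is the actual content of the lemma, and you say so yourself: the ``main obstacle'' you name at the end --- promoting a three-dimensional isotopy $T \simeq T_{12}$ to a conjugation by an element of $G$ that also carries $\Sigma$ to $\Sigma$ --- is exactly what must be proved, and the only mechanism you offer for it (coordinate permutations and sign changes acting ``transitively on the finitely many resulting configurations'') is unjustified and almost certainly false as stated. The ways in which $T$ can sit relative to $\Sigma$ and $S$, even after your normalization, are not finite in number: they are parametrized (in the paper's formulation) by the spines of the once-punctured torus $S' = H_1 \cap S$ adapted to $T$, an infinite family on which no finite group of coordinate symmetries can act transitively. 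The paper's proof handles precisely this point by showing that the infinite subgroup generated by $\alpha_{12}$, $\alpha_{21}$, $\tau_{13}$ and $\tau_{23}$ carries the standard spine of $S'$ (the images of the $x_1$- and $x_2$-axes) onto the spine $\gamma_1, \gamma_2$ determined by $T$ (one edge meeting $T$ twice, the other disjoint from $T$), and that the resulting element of $G$ takes the torus of $D_1^1 \cup D_2^2$ onto $T$; conjugating $\tau$ by it gives $\phi$. Without an argument of this kind --- one that actually uses infinite-order elements of $G$ to match $T$ together with its position relative to $\Sigma$ --- your reduction to ``$T$ is isotopic to $T_{12}$ in $T^3$'' proves nothing about conjugacy in $G$, nor even in $Mod(T^3,\Sigma)$.

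There is also a gap in the normalization step itself. The standard fact you invoke --- that a properly embedded annulus in $T^2 \times I$ with one boundary circle in each end is isotopic to a vertical annulus --- holds when the boundary curves are essential, since then the annulus is incompressible; but you immediately deduce from the compressibility of $T = \partial N$ that $\bar c$ bounds a disk in $S$, i.e.\ that the boundary is inessential, and in that case the verticality claim fails (the annulus could be a knotted tube, giving a knotted torus $T$). So the argument as written is internally inconsistent: ruling out such knotting is exactly where the hypothesis that $\Sigma \setminus T$ is planar must do real work, and your Euler-characteristic count alone does not accomplish this. The paper avoids the issue entirely by never isotoping $T$ in $T^3$; it reads off from the planarity hypothesis how $T$ meets $\Sigma$ and $S'$, and then moves the standard configuration onto it by explicit generators.
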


\begin{proof}
Because the intersection $T \cap S$ is a single loop and this loop is essential in $T$, the complement $T \setminus S$ is an annulus $A \subset T$.  The intersection of $T$ with $\Sigma$ consists of four loops, each of which intersects $A$ in a properly embedded, essential arc.  Because $\Sigma \setminus T$ is planar, these arcs cut the twice punctured, genus two surface $\Sigma \setminus S$ into planar pieces.  Because the pieces are planar, the arcs must cut $\Sigma \setminus T$ into two annuli.  

The intersection $H_1 \cap S$ is a punctured torus $S' \subset S$.  The intersection of $T$ with $S'$ is a pair of properly embedded arcs (with endpoints in $T \cap \Sigma$) that cut $S'$ into a disk and an annulus.  There is a spine for $S'$ such that one edge of the spine intersects $T$ in two points and the other edge is disjoint from $T$.  Let $\gamma_1$ be the edge that intersects $T$ and $\gamma_2$ the edge disjoint from $T$.

Let $\gamma'_1$ be the edge of the spine $K_1$ for $H_1$ that is the image of the $x_1$ axis of $\mathbf{R}^3$.  Let $\gamma'_2$ be the edge of $K$ coming from the $x_2$ axis.  These two edges form a spine for $S$.  By applying the generators $\alpha_{12}$, $\alpha_{21}$, $\tau_{13}$ and $\tau_{23}$, one can send $\gamma'_1$, $\gamma'_2$ onto any spine for $S'$.  In particular, one can send these edges onto $\gamma_1$ and $\gamma_2$.  This automorphism sends the torus defined by $D_1^1$ and $D_2^2$ onto $T$ so conjugating $\tau$ by this automorphism produces $\phi$.
\end{proof}

\section{Defining disks}
\label{disksect}

Let $D_1 \subset H_1$ and $D_2 \subset H_2$ be properly embedded, essential disks in the standard Heegaard splitting $(\Sigma, H_1, H_2)$ of $T^3$.

\begin{Def}
The disks $D_1$, $D_2$ are a \textit{defining pair} if the boundary of $D_1$ can be isotoped disjoint from $\partial D_2$.
\end{Def}

Each of the pairs of disks $D_1^i$, $D_2^i$ constructed above is a defining pair.

\begin{Lem}
\label{definetoruslem}
A defining pair of disks determines a unique isotopy class of incompressible tori in $T^3$.
\end{Lem}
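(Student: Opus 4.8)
The plan is to reduce everything to the classification of incompressible tori in $T^3$. By Waldhausen's theory of incompressible surfaces in Haken manifolds (together with the fact that $\pi_1(T^3)=\mathbf{Z}^3$ contains no closed surface group of genus at least two), the only closed incompressible surfaces in $T^3$ are incompressible tori, each is isotopic to a linear subtorus, and two of them are isotopic if and only if they carry the same primitive class in $H_2(T^3;\mathbf{Z})\cong\mathbf{Z}^3$, up to sign. So it suffices to produce from a defining pair an incompressible torus and to show that its homology class is determined, up to sign, by the isotopy classes of $D_1$ and $D_2$.

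First I would record some generalities. Since three is the Heegaard genus of $T^3$, the splitting is irreducible, so $\partial D_1$ bounds no disk in $H_2$ and $\partial D_2$ bounds no disk in $H_1$. From the Mayer--Vietoris sequence of the splitting, and because $b_1(T^3)$ equals the genus, the two ``Lagrangians'' $\ker(H_1(\Sigma)\to H_1(H_1))$ and $\ker(H_1(\Sigma)\to H_1(H_2))$ coincide; call this primitive rank three subgroup $L\subset H_1(\Sigma;\mathbf{Z})$. Then both $[\partial D_1]$ and $[\partial D_2]$ lie in $L$. The crucial claim to be proved is that $D_1$ and $D_2$ are non-separating and that $[\partial D_1]=\pm[\partial D_2]$ in $H_1(\Sigma;\mathbf{Z})$.

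Granting this, isotope $\partial D_1$ off $\partial D_2$ in $\Sigma$; they then cobound a subsurface $P\subset\Sigma$, which is connected because neither curve separates $\Sigma$, so an Euler characteristic count forces $P$ to be a twice-punctured torus (it is not an annulus, for that would make $\partial D_1$ isotopic to $\partial D_2$ on $\Sigma$ and hence the splitting reducible). Capping the two boundary circles of $P$ with $D_1$ and $D_2$ yields a torus $T=P\cup D_1\cup D_2$ embedded in $T^3$. Intersecting $T$ with the core loops of $H_2$ and using Lefschetz duality, one identifies $[T]\in H_2(T^3;\mathbf{Z})$ with the image of $[\partial D_1]\in L$ under the isomorphisms $L\cong H_2(H_2,\partial H_2)\cong H^1(H_2)\cong H^1(T^3)$; since $\partial D_1$ is non-separating, this class is primitive and nonzero. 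As $[T]\neq0$, the torus $T$ does not separate $T^3$, and a non-separating torus in the irreducible manifold $T^3$ is incompressible (a compressible one would bound a solid torus and hence separate). The only choice made in the construction is $P$ versus $\Sigma\setminus P$, and the alternative torus $T'=(\Sigma\setminus P)\cup D_1\cup D_2$ has $[T']=[\Sigma]-[T]=-[T]$, so by the classification $T'$ is isotopic to $T$; likewise, isotoping $D_1$ or $D_2$ within its class does not change $[T]$. Hence the defining pair determines a unique isotopy class of incompressible tori.

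The step I expect to be the main obstacle is the crucial claim, i.e.\ that disjoint meridians of the two handlebodies are homologous on $\Sigma$ --- equivalently, that capping $D_1$ by a surface in $H_2$ and capping $D_2$ by a surface in $H_1$ give tori with the same homology class up to sign. Homology and irreducibility by themselves do not force this. The cleanest route I see is to compress $\Sigma$ along $D_1$ to obtain a genus two surface $F_1$ splitting $T^3$ into the handlebody $H_1\setminus N(D_1)$ and $B_1=H_2\cup N(D_1)$, to note that $[\partial D_1]\neq\pm[\partial D_2]$ would make $\partial D_2$ non-separating on $F_1$, and then to run a case analysis on the essential disk $D_2$ of $B_1$ --- compressing $B_1$ along it and examining the resulting fundamental group, or examining $\partial D_1$ on $H_2$ cut along $D_2$ --- to contradict irreducibility of the splitting. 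Making this case analysis airtight, and disposing of the separating possibilities along the way, is where the real content of the lemma lies; once it is in hand, the construction above is visibly symmetric in $D_1$ and $D_2$ and the proof is finished.
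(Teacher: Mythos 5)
Your overall reduction is sound in outline: granted that both disks are non-separating and that $\partial D_1\cup\partial D_2$ separates $\Sigma$ (equivalently $[\partial D_1]=\pm[\partial D_2]$ in $H_1(\Sigma)$), capping the twice-punctured torus between them with $D_1$ and $D_2$ gives a non-separating, hence incompressible, torus, and the classification of incompressible surfaces in $T^3$ gives uniqueness of its isotopy class. But the statement you yourself flag as ``the crucial claim'' is exactly the content of the lemma, and you do not prove it; you only sketch a plan (``compress along $D_1$, run a case analysis on $D_2$ in $H_2\cup N(D_1)$, contradict irreducibility'') and acknowledge it is not carried out. As written, nothing in your argument rules out the configuration in which $\partial D_1\cup\partial D_2$ is non-separating in $\Sigma$: compressing $\Sigma$ along both disks then yields a single null-homologous torus, and to derive a contradiction you need to know that compressing a Heegaard surface along a disjoint pair of disks on opposite sides produces a surface that is either incompressible, or compresses down to an incompressible surface or to a reducing sphere. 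That is precisely the Casson--Gordon weak reduction theorem, which is the engine of the paper's proof: the paper compresses along $D_1$ and $D_2$, invokes Casson--Gordon plus irreducibility of the splitting (which follows since $\pi_1(T^3)$ has rank three), and then uses the fact that the only incompressible surfaces in $T^3$ are non-separating tori to force the compressed surface to be two parallel incompressible tori. Your homological preliminaries (the coincidence of the two Lagrangians, etc.) do not substitute for this step, as you note yourself.

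The separating-disk possibilities are likewise deferred rather than handled: the paper disposes of them by observing that a separating disk in one handlebody determines a unique non-separating disk disjoint from it (the meridian of the genus-one piece it cuts off), and replaces the separating disk by that one before running the compression argument. So while your construction of the torus and your uniqueness argument via the homology class (which is a clean alternative to the paper's ``no choice was involved'' remark) are fine, the proof is incomplete at its central point; to finish it you would need either to import the Casson--Gordon result as the paper does, or to supply a genuine $\pi_1$/case analysis in its place, which you have not done.
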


\begin{proof}
First note that $(\Sigma, H_1, H_2)$ is irreducible because cutting $\Sigma$ along a reducing sphere would produce a genus two Heegaard splitting.  This is impossible because $\pi_1(T^3)$ has rank three.  Thus the boundaries of $D_1$ and $D_2$ cannot be isotopic.  

We will see momentarily that both $D_1$ and $D_2$ must be non-separating disks in $H_1$, $H_2$, respectively, but for now note that if $D_1$ is separating, then it cuts $\Sigma$ into a genus two handlebody and a  genus one handlebody.  The boundary of $D_2$ must be contained in the boundary of the genus two handlebody so a (non-separating) meridian disk for the genus one handlebody will be disjoint from $D_2$.  Thus if $D_1$ is separating then it uniquely determines a non-separating disk in $H_1$ disjoint from $\partial D_2$ and we can replace $D_1$ with this disk.  Likewise, if $D_2$ is separating then we can replace $D_2$ with the unique non-separating disk in $H_2$ determined by $D_2$.  Thus we can assume $D_1$ and $D_2$ are non-separating.

Casson and Gordon~\cite{cgred} showed that given a pair $D_1 \subset H_1$, $D_2 \subset H_2$ of disjoint essential disks, compressing $\Sigma$ across $D_1$ and $D_2$ produces a separating surface $S$ that is incompressible or compresses to either an incompressible surface or to a reducing sphere for the Heegaard splitting.  Because the genus three Heegaard splitting of $T^3$ is irreducible, $S$ is incompressible or compresses to an incompressible surface.

Because $\partial D_1$ and $\partial D_2$ are non-separating in the genus three surface $\Sigma$, the surface $S$ consists of one or two tori.  The only incompressible surfaces in $T^3$ are non-separating tori, so $S$ must consist of two parallel incompressible tori whose union is separating, though each is non-separating on its own.  There was no choice involved in the construction of $S$, and the two components of $S$ are isotopic so $D_1$ and $D_2$ define a unique isotopy class of incompressible tori.  The tori defined by $D_1^1$ and $D_2^1$ are shown in Figure~\ref{definingfig}.
\end{proof}
\begin{figure}[htb]
  \begin{center}
  \includegraphics[width=3.5in]{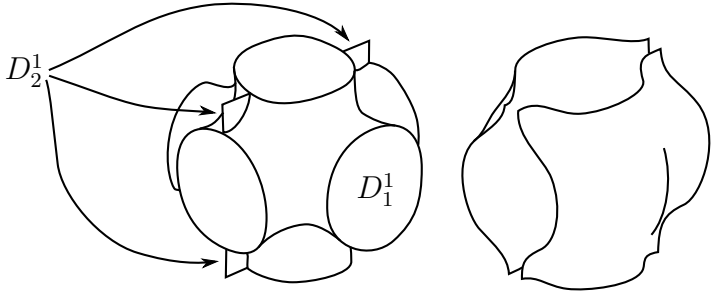}
  \put(-268,90){$D_2^1$}
  \put(-135,45){$D_1^1$}
  \caption{Compressing $\Sigma$ along a defining pair produces a pair of incompressible tori.}
  \label{definingfig}
  \end{center}
\end{figure}

\begin{Lem}
\label{uniquepairlem}
If $D_1$ and $D_2$ are a defining pair then each of $D_1$ and $D_2$ is non-separating in $\Sigma$.  If $D'_2$ is an essential, properly embedded disk in $H_2$ disjoint from $\partial D_1$ then $D'_2$ is isotopic to $D_2$.  Similarly, if $D'_1$ is essential and properly embedded in $H_1$ and $\partial D'_1$ is disjoint from $\partial D_2$ then $D'_1$ is isotopic to $D_1$.
\end{Lem}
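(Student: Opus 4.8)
The plan is to prove the uniqueness assertions first, for defining pairs already known to consist of non-separating disks, and then to bootstrap from these to the statement that $D_1$ and $D_2$ must in fact be non-separating. Throughout I would use, besides Lemma~\ref{definetoruslem}, that $T^3$ is irreducible and that every incompressible torus in $T^3$ is non-separating; consequently $T^3$ cut along an incompressible torus is a product $T^2\times I$, and so any two \emph{disjoint} incompressible tori in $T^3$ are isotopic, the second one lying in the $T^2\times I$ obtained by cutting along the first and hence being parallel to a boundary torus there.

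\emph{The core uniqueness step.} Suppose $(E_1,E_2)$ is a defining pair with $E_1$ and $E_2$ non-separating, and let $E_2'\subset H_2$ be an essential, properly embedded disk with $\partial E_2'$ disjoint from $\partial E_1$; I want $E_2'\simeq E_2$. First one checks that $E_2'$ cannot be separating: a separating $E_2'$ cuts $\Sigma$ into a once-punctured torus and a once-punctured genus-two surface, $\partial E_1$ must avoid $\partial E_2'$ and so lies in one of the two pieces, and a short case analysis — using irreducibility of the splitting to forbid $\partial E_1$ being parallel to $\partial E_2'$, and using that two disjoint essential curves in a once-punctured torus are isotopic — produces a reducing sphere in each case. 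So $E_2'$ is non-separating, and $(E_1,E_2')$ is again a defining pair of non-separating disks. Next I would put $E_2$ and $E_2'$ into a convenient position: isotope so that $\partial E_1$ misses $\partial E_2'$, and then, since any circle of $E_2\cap E_2'$ is inessential on the two disks and any arc can be removed by an outermost-arc isotopy, arrange $E_2\cap E_2'=\emptyset$ (in particular $\partial E_2\cap\partial E_2'=\emptyset$). Now Lemma~\ref{definetoruslem} assigns to $(E_1,E_2)$ and to $(E_1,E_2')$ isotopy classes of incompressible tori, with representatives $T$ and $T'$ obtained by compressing $\Sigma$ first along $E_1$ and then along $E_2$, respectively $E_2'$. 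Because $E_2$ and $E_2'$ are disjoint, and disjoint from $E_1$, the surfaces $T$ and $T'$ can be pushed apart, so by the observation above they are isotopic: the two defining pairs determine the \emph{same} class of incompressible tori. Finally one recovers $E_2$ from this class together with $E_1$, concluding $E_2'\simeq E_2$. The symmetric statement, with the roles of $H_1$ and $H_2$ interchanged, is proved in the same way.

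\emph{Deducing that $D_1,D_2$ are non-separating.} Suppose $D_1$ were separating. As in the proof of Lemma~\ref{definetoruslem}, $D_1$ splits $H_1$ into a genus-two handlebody and a solid torus, $\partial D_2$ lies in the boundary of the genus-two piece, and the meridian disk $\widehat D_1$ of the solid-torus piece is non-separating, disjoint from $\partial D_2$, and not isotopic to $D_1$. If $D_2$ is also separating, replace it likewise by the non-separating meridian $\widehat D_2$ of its solid-torus piece; one checks that $\partial\widehat D_1$ and $\partial\widehat D_2$, and also $\partial D_1$ and $\partial\widehat D_2$, can be made disjoint, so $(\widehat D_1,\widehat D_2)$ — or $(\widehat D_1,D_2)$ if $D_2$ was already non-separating — is a defining pair of non-separating disks. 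Applying the $H_1$ version of the core step to this pair, every essential disk in $H_1$ whose boundary is disjoint from that of the chosen $H_2$-disk is isotopic to $\widehat D_1$. But $D_1$ is such a disk and is not isotopic to $\widehat D_1$, a contradiction. Hence $D_1$, and by the same argument $D_2$, is non-separating; the two uniqueness assertions of the lemma are then exactly the core step applied to $(D_1,D_2)$ itself.

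The step I expect to be the main obstacle is the last sentence of the core step: passing from ``$(E_1,E_2)$ and $(E_1,E_2')$ determine the same isotopy class of incompressible tori'' to ``$E_2'$ is isotopic to $E_2$.'' Carrying this out cleanly requires arranging the torus in minimal position with respect to the Heegaard surface $\Sigma$, so that its intersection with $H_2$ is a well-defined incompressible and $\partial$-incompressible subsurface of $H_2$ determined up to isotopy by the class of the torus, together with a uniqueness-of-minimal-position argument; one then identifies $E_2$ (and likewise $E_2'$) as the disk across which $\Sigma$ is compressed in order to reach that subsurface. Ruling out a separating $E_2'$ at the start of the core step is a second, more elementary, point, handled by the case analysis on the location of $\partial E_1$ relative to $\partial E_2'$.
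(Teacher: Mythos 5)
Your reduction of the separating cases to the non-separating case is fine and essentially parallels the paper, but your ``core uniqueness step'' has a genuine gap exactly where you flag it, and it is not a repairable detail: it is the entire content of the lemma. You show (modulo a small innermost-disk argument to actually push the compressed surfaces apart, which you assert but do not give) that $(E_1,E_2)$ and $(E_1,E_2')$ determine isotopic incompressible tori, and then you say ``one recovers $E_2$ from this class together with $E_1$.'' That recovery statement is not easier than the lemma --- it \emph{is} the lemma, restated with an extra hypothesis --- and the sketch you offer for it (put the torus in minimal position with respect to $\Sigma$, so that $T\cap H_2$ is a subsurface determined up to isotopy, and read off $E_2$ as the compressing disk) cannot work in the form stated, because it makes no use of $E_1$. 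The isotopy class of the torus alone does \emph{not} determine the defining pair: a torus twist is isotopically trivial in $T^3$, hence carries a defining pair to a defining pair determining the same torus class, while in general moving the disks to non-isotopic ones. So any correct version of your recovery step must use the fixed disk $E_1$ in an essential way, and there is no standard ``uniqueness of minimal position'' theorem for an incompressible torus relative to a Heegaard surface that you can cite to finish; isotopies of the torus are free to sweep across $\Sigma$ and change the intersection pattern.

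The paper avoids this entirely by arguing directly inside the manifold $H_2\cup N(D_1)$: compressing $\partial\bigl(H_2\cup N(D_1)\bigr)$ along $D_2$ yields the two parallel incompressible tori, which exhibits $H_2\cup N(D_1)$ as $T^2\times[0,1]$ with a one-handle attached to opposite boundary tori. Since $T^2\times[0,1]$ has incompressible boundary, every essential disk in this manifold is isotopic to the co-core (meridian) of the one-handle, and any essential disk of $H_2$ disjoint from $D_1$ remains essential there (its boundary cannot become trivial or parallel to $\partial D_1$ without producing a reducing sphere, contradicting irreducibility of the splitting). Hence the disk in $H_2$ disjoint from $D_1$ is unique up to isotopy. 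If you want to salvage your outline, you should replace the ``recover $E_2$ from the torus class'' step with an argument of this kind, i.e.\ a classification of essential disks in the complement of $E_1$, rather than an appeal to the torus alone.
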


In other words, a disk in $H_1$ or $H_2$ is half of at most one defining pair.  (Some disks are not half of any defining pair.)

\begin{proof}
Let $D_1$ and $D_2$ be a defining pair for $(\Sigma, H_1, H_2)$.  First assume that each of $D_1$ and $D_2$ is non-separating.  As in the proof of Lemma~\ref{uniquepairlem}, compressing $\Sigma$ along $D_1$ and $D_2$ produces a surface $S$ consisting of two parallel incompressible tori.  If we first compress $S$ across $D_1$ then we get a surface isotopic to the boundary of $H_2 \cup N(D_1)$, where $N(D_1)$ is the closure of a regular neighborhood of $D_1$.  

Compressing $\partial(H_2 \cup N(D_1))$ across $D_2$ produces a pair of parallel tori that cut $T^3$ into two pieces homeomorphic to $T^2 \times [0,1]$.  Thus $H_2 \cup N(D_1)$ is homeomorphic to attaching a one-handle to $T^2 \times [0,1]$ with ends on opposite boundary tori.  Any boundary compressing disk for $H_2$ that is disjoint from $D_1$ will be a boundary compressing disk for $H_2 \cup N(D_1)$.  Any boundary compressing disk for $H_2 \cup N(D_1)$ is isotopic to a meridian of the one handle so there is a unique compressing disk for $H_2$ disjoint from $D_1$.  In other words and compressing disk $D'_2$ will be isotopic to $D_2$.  The same argument with $D_1$ and $D_2$ reversed shows that any disk in $H_1$ disjoint from $D_2$ is isotopic to $D_1$.

If $D_2$ is separating and $D_1$ is non-separating then as noted in the proof of Lemma~\ref{definetoruslem} there is a non-separating disk $D'_2$ that is disjoint from both $D_1$ and $D_2$.  Because $D_1$ and $D'_2$ are disjoint and each is non-separating, we have just shown that $D_2$ must be isotopic to $D'_2$.  Thus if $D_1$ is non-separating then $D_2$ must be non-separating.

Finally, if $D_1$ is separating then there is a non-separating disk $D'_1$ disjoint from $D_1$ and $D_2$.  Because $D'_1$ and $D_2$ are disjoint and $D'_1$ is non-separating, $D_2$ must be non-separating.  Because $D'_1$ and $D_2$ are disjoint and each is non-separating, $D_1$ must be isotopic to $D'_1$, so $D_1$ is non-separating.  This proves the Lemma in its full generality.
\end{proof}

\section{Graphics and disks}
\label{grndisksect}

The key to the proof of Theorem~\ref{mainthm} is the following Lemma:

\begin{Lem}
\label{twistslem1}
Let $D_1$, $D_2$ and $D'_1$,$D'_2$ be disjoint pairs.  Then the incompressible tori determined by the two pairs are isotopic in $T^3$ if and only if there is a sequence of torus twists on $(\Sigma, H_1, H_2)$ taking $D'_1$, $D'_2$ onto $D_1$, $D_2$ such that each torus twist is along a torus that cuts $\Sigma$ into planar surfaces and intersects the incompressible torus defined by $D_1$, $D_2$ in a single loop.
\end{Lem}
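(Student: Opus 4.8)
The plan is to prove both directions of the biconditional. The ``if'' direction is easy: a torus twist is isotopy trivial in $T^3$ (it lies in the kernel of $i$), so it cannot change the isotopy class of any incompressible torus; hence if a sequence of torus twists takes $D'_1, D'_2$ onto $D_1, D_2$, the incompressible tori defined by the two pairs (via Lemma~\ref{definetoruslem}) must agree. The real content is the ``only if'' direction, where we are given that the two defining pairs produce isotopic incompressible tori and must manufacture the sequence of torus twists.

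For the ``only if'' direction I would argue as follows. Let $\mathcal{T}$ denote the common incompressible torus (more precisely its isotopy class), and let $S$ be a fixed representative; by Lemma~\ref{definetoruslem} both $D_1 \cup D_2$ and $D'_1 \cup D'_2$ arise by compressing $\Sigma$ to something isotopic to $S$. First isotope the second pair so that $\partial D'_1 \cup \partial D'_2$ lies on $\Sigma$ in a way that the compression is literally along $S$ (not just up to isotopy); this uses that the compression surface is canonical. Now both pairs of disks are, after this normalization, ``dual'' to the same torus $S$ in the sense that each $\partial D_i$, $\partial D'_i$ is a curve on $\Sigma$ whose surgery yields $S$. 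The strategy is then to compare the two pairs by looking at how the four-punctured sphere $\Sigma \cap N$ (for $N$ a neighborhood of the defining disks) sits relative to $S$, and to realize the passage from $(\partial D'_1, \partial D'_2)$ to $(\partial D_1, \partial D_2)$ as a product of elementary moves, each of which is a twist along a torus meeting $S$ in one essential loop and cutting $\Sigma$ into planar pieces. Concretely: the disks dual to $S$ in $H_1$ form (up to the ambiguity controlled by Lemma~\ref{uniquepairlem}) an essentially rigid family, and the residual freedom is exactly spinning around annuli in $S$ — which is precisely a torus twist of the required type. So I would set up a ``graphic''-style or normal-form argument comparing $D_1$ and $D'_1$ inside $H_1$ as disks disjoint from $S$, show they differ by a sequence of handle-slides realizable by such torus twists, and similarly for $D_2, D'_2$ inside $H_2$.

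The main obstacle is the ``only if'' direction's combinatorial core: showing that any two defining pairs dual to the same incompressible torus $S$ are related by a sequence of torus twists each satisfying the two strong geometric constraints (planar complement in $\Sigma$, single essential intersection loop with $S$). The difficulty is twofold. First, one must control the intersection pattern of $S$ with the disks $D'_i$ and use an innermost-disk / minimal-position argument (together with irreducibility of the splitting, established in Lemma~\ref{definetoruslem}) to put everything in a standard position relative to $S$; this is where the bulk of the work lies, and it is presumably where the paper deploys ``graphics'' — a Rubinstein–Scharlemann-style sweepout of $T^3$ by copies of the torus, recording the intersections with $\Sigma$, $D_1 \cup D_2$, and $D'_1 \cup D'_2$. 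Second, one must verify that the elementary moves extracted from this standard position genuinely have planar complement in $\Sigma$ and meet $S$ in one essential curve — the planarity is what makes Lemma~\ref{conjtwistlem} applicable later, so it cannot be dropped. I would expect the proof to proceed by induction on the complexity of $D'_1 \cup D'_2 \cap S$ (or on the number of intersection arcs/loops in the graphic), peeling off one admissible torus twist at each stage until the second pair coincides with the first.
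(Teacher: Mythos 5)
Your ``if'' direction is fine and matches the paper: torus twists lie in the kernel of $i$, so they cannot change the isotopy class of the induced incompressible torus. But the ``only if'' direction, which is the entire content of Lemma~\ref{twistslem1}, is not actually proved in your proposal --- it is a plan plus an acknowledgement of where the difficulty lies. The assertion at the heart of your sketch, namely that the disks in $H_1$ and $H_2$ dual to a fixed incompressible torus $S$ form a ``rigid family whose residual freedom is exactly spinning around annuli in $S$,'' i.e.\ exactly torus twists along tori meeting $S$ in one essential loop and cutting $\Sigma$ into planar pieces, is precisely the statement to be established; asserting it and then saying one would ``set up a graphic-style or normal-form argument'' and ``expect the proof to proceed by induction on the complexity of $D'_1\cup D'_2\cap S$'' leaves the core claim unsupported. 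Note also that an innermost-disk/minimal-position argument only removes inessential intersections; the essential differences between the two defining pairs are exactly what must be converted into admissible torus twists, and you give no complexity that decreases under such a twist, no reason the process terminates, and no verification that the elementary moves you extract satisfy the two geometric constraints (planar complement in $\Sigma$, single essential intersection with $S$).

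The paper's route is different in structure and supplies exactly what your sketch is missing. It fixes one Morse function $f$ whose middle level sets sweep out $\Sigma$ (chosen so the low and high level surfaces are incompressible to one side), encodes each defining pair by a torus bundle map $\pi^0$, respectively $\pi^1$, via Lemma~\ref{grapchicdefinesdiskslem}, and uses the hypothesis that the two pairs determine isotopic tori to conclude $\pi^0$ and $\pi^1$ are isotopic bundle maps. Lemma~\ref{graphichangeslem} then reduces the comparison to a finite sequence of local moves on the graphic of $f\times\pi^s$, and the proof checks move by move that the induced defining pair is unchanged except at the non-generic moments when the interval $(a_s,b_s)$ collapses; at such a moment both boundary loops are replaced inside four-punctured spheres, and one checks directly that this replacement is realized by a torus twist along a torus meeting the incompressible torus in a single loop and cutting $\Sigma$ into two four-punctured spheres. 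So the sequence of twists, its termination, and the planarity and single-intersection conditions all come out of the graphic analysis rather than being assumed. To repair your proposal you would need to either carry out this sweep-out argument or supply an independent combinatorial proof of the ``rigidity'' claim, which is not an obvious or routine step.
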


This will be proved in Section~\ref{isotopysect}.  The proof relies on an understanding of Morse functions and stable functions on 3-manifolds, which we will describe in this and the next sections.

Let $f : T^3 \rightarrow \mathbf{R}$ be a Morse function on $T^3$ with one index zero  critical point at level 0, three index one then three index two critical points (at distinct levels), then one index three critical point at level 1.  

Let $\pi : T^3 \rightarrow S^1$ be a fiber bundle map such that $\pi^{-1}(s)$ is a torus for each $s \in S^1$.  We can think of $\pi$ as a circle valued Morse function on $T^3$.  As was shown in~\cite{stabs}, after an arbitrarily small isotopy the product of two Morse functions on a 3-manifold $M$ is a stable function from $M$ to $\mathbf{R}^2$.  The same argument in this situation implies that after an arbitrarily small isotopy, the product of $f$ and $\pi$ is a stable function $f \times \pi : T^3 \rightarrow [0,1] \times S^1$.

Assume $f$ and $g$ have been isotoped so that $F = f \times \pi$ is stable.  For each $t \in [0,1]$, define $f_t$ to be the level surface $f^{-1}(t)$.  Define $\pi_t$ to be the restriction $\pi|_{f_t}$.  As was shown in~\cite{stabs}, the discriminant set $\mathcal{J}$ of $F$ is a one dimensional submanifold in $T^3$, whose image $F(\mathcal{J})$ is a finite graph in $[0,1] \times S^1$.  We will say that $F$ is \textit{generic} if $F$ is stable and for any $s \in S^1$, there is at most one vertex of $F(\mathcal{J})$ in the line $[0,1] \times \{s\}$.  

Because $f$ is Morse, $f_t$ is a surface for all but finitely many values of $t$.  Because $F$ is stable, $\pi_t$ is a Morse function on $f_t$ for all but finitely many of the regular values of $t$.  If $F$ is generic then at the values of $t$ where $\pi_t$ fails to be Morse, $\pi_t$ will have either exactly two critical points at the same level or exactly one degenerate critical point.  (If $F$ is stable but not generic, there will be a value of $t$ where there are more than one pairs of critical points at the same level.)  Let $\ell_1 \in [0,1]$ be the level of the highest index one critical point in $f$ and let $\ell_2$ be the level of the lowest index two critical points.  (By assumption, $\ell_1 < \ell_2$.)

Assume some level set $f_t$ is equal to $\Sigma$ (as a subset of $T^3$) for some $t \in (\ell_1,\ell_2)$.  Then the surfaces $\{f_t | t \in (\ell_1,\ell_2)\}$ determine an isotopy of $\Sigma$, inducing a canonical (up to isotopy) identification $c_t : \Sigma \rightarrow f_t$ for each $t \in (\ell_1,\ell_2)$.

\begin{Lem}
\label{grapchicdefinesdiskslem}
Assume $f$ has the property that for any regular value $t < \ell_1$, $f_t$ is not compressible into $f^{-1}([t,1])$ and for any regular $t > \ell_2$, $f_t$ is not compressible into $f^{-1}([0,t])$.  If $f \times \pi : T^3 \rightarrow [0,1] \times S^1$ is generic then there is a unique (up to isotopy) disjoint pair $D_1$, $D_2$ such that for some $t \in (\ell_1, \ell_2)$, $c_t$ sends $\partial D_1$ and $\partial D_2$ to loops isotopic into regular level sets of $\pi_t$.
\end{Lem}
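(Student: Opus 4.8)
The plan is to analyze the circle-valued Morse functions $\pi_t = \pi|_{f_t}$ as $t$ ranges over $(\ell_1,\ell_2)$ and track how the isotopy type of $\pi_t$ changes. For $t$ in this interval, $f_t$ is a genus three surface (it is $\Sigma$, up to the identification $c_t$), and $\pi_t$ is a circle-valued Morse function on it for all but finitely many $t$. A circle-valued Morse function on a surface has index zero, one, and two critical points; the number of index one critical points minus twice the others is controlled by the Euler characteristic, so generically $\pi_t$ has some fixed collection of critical points. The regular level sets of $\pi_t$ are unions of simple closed curves in $f_t$; via $c_t$ these pull back to curves in $\Sigma$. The first step is to observe that, because $F = f\times\pi$ is generic, between consecutive non-generic values of $t$ the function $\pi_t$ does not change isotopy type, and at each non-generic value it changes either by a "birth/death" of a cancelling pair of critical points or by two critical points swapping levels (a handle slide); this is the standard Cerf-theory description, applied fiberwise, and is exactly the content of what the discriminant graph $F(\mathcal{J})$ records.

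The second step is to extract the disks. A regular level set of $\pi_t$ that bounds a disk in $f^{-1}([t,1])$ to one side — equivalently, after compressing $f_t$ upward — gives a candidate for $\partial D_1$, and symmetrically a level set that compresses downward gives $\partial D_2$. Concretely, I would argue that for $t$ slightly above $\ell_1$, the surface $f_t$ is obtained from the lower part of $T^3$ by attaching the three index one handles; the hypothesis that $f_t$ is \emph{not} compressible into $f^{-1}([t,1])$ for $t<\ell_1$, together with the structure of the one-handles, forces a particular configuration of curves. The key point is that $f_t$ for $t\in(\ell_1,\ell_2)$ \emph{is} the Heegaard surface $\Sigma$: it separates $T^3$ into the handlebody $H_1 = f^{-1}([0,t])$ (built from the $0$-handle and three $1$-handles) and $H_2 = f^{-1}([t,1])$ (built from the $3$-handle and three $2$-handles dually). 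A meridian disk of $H_1$ appears as a level curve of $\pi_t$ precisely when that level curve, pushed down, is the belt sphere of one of the index one critical points; and there should be a $t$ and a level of $\pi_t$ realizing such a curve because the index one critical points of $f$ lie below level $t$ and their co-cores are disks in $H_1$ whose boundaries can be isotoped into fibers of $\pi$. Running the dual argument in $H_2$ produces $D_2$. One must check these two disks can be realized simultaneously (at possibly different levels $t$, but the statement only asks for \emph{some} $t\in(\ell_1,\ell_2)$ for each, and in fact for a common $t$ by an interpolation argument) and that they are a disjoint pair in the sense of Section~\ref{disksect}, which follows because level curves of the same Morse function $\pi_t$ on $\Sigma$ are disjoint, hence $\partial D_1$ and $\partial D_2$ can be taken disjoint.

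The third step is uniqueness. Suppose $D_1',D_2'$ is another such pair, realized at level $t'$. Since $\partial D_1'$ is isotopic into a regular fiber of $\pi_{t'}$, it is a curve in $\Sigma$ that compresses in $H_1$ and is isotopic to a $\pi$-horizontal curve. The genericity of $F$ means that as $t$ varies the only horizontal-up-to-isotopy compressing curves that occur are those coming from the handle structure, and there is essentially one such family; Lemma~\ref{uniquepairlem} then does the heavy lifting, since it says a disk in $H_1$ is half of at most one defining pair, so once $D_2$ (equivalently $D_2'$) is pinned down the partner is forced. So I would first show $D_2$ and $D_2'$ are isotopic — they are both non-separating disks in $H_2$ whose boundaries are horizontal for $\pi$ at some level — and then invoke Lemma~\ref{uniquepairlem}. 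I expect the main obstacle to be the careful bookkeeping in the second step: showing that a horizontal compressing curve genuinely occurs as a level set of some $\pi_t$ for $t$ strictly between $\ell_1$ and $\ell_2$, rather than only in a limiting sense as $t\to\ell_1^+$ or $t\to\ell_2^-$, and ruling out that the generic changes in $\pi_t$ destroy all such curves before they can be used. This requires understanding the graphic $F(\mathcal{J})$ near the vertical lines $\{\ell_1\}\times S^1$ and $\{\ell_2\}\times S^1$ and exploiting the non-compressibility hypotheses to control the Cerf moves there.
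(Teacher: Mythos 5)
There is a genuine gap, and it sits exactly where you suspect: the existence step. Your mechanism for producing the compressing level loops --- that the co-cores/belt spheres of the index one critical points of $f$ ``can be isotoped into fibers of $\pi$'' --- is unjustified, and in fact misguided: which compressing disks of $H_1$, $H_2$ appear as level loops of $\pi_t$ depends on $\pi$, not on the handle structure of $f$ (this is the whole point of the lemma --- in its application, different bundle maps $\pi^0$, $\pi^1$ determine \emph{different} defining pairs from the same $f$). The paper's existence argument is quite different: it defines $C_1$ (resp.\ $C_2$) to be the set of $t$ for which some essential level loop of $\pi_t$ bounds a disk in $f^{-1}([0,t])$ (resp.\ $f^{-1}([t,1])$), invokes Bachman--Schleimer to see that for a sweep-out these sets are half-open intervals anchored at the two ends of $[0,1]$, and invokes Cooper--Scharlemann to see that every level surface must carry a level loop essential in the surface but trivial in the torus fiber (otherwise the surface would be an incompressible torus or a strongly irreducible genus two Heegaard surface, impossible for genus three), so $C_1\cup C_2$ covers everything and $C_1\cap C_2$ is a nonempty open interval $(a,b)$. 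The non-compressibility hypotheses on $f$ are then used precisely to push this interval into $(\ell_1,\ell_2)$: if, say, $C_1$ missed $(\ell_1,\ell_2)$, a compressing level loop just below $\ell_1$ would either survive the one-handle attachment (contradiction) or compress upward (contradicting the hypothesis). Your ``interpolation argument'' for a common $t$ is exactly this missing structure; without it you have no common level, and Lemma~\ref{uniquepairlem} by itself only gives disjointness consequences once both disks are already realized simultaneously.

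The uniqueness step is also underpowered. You assert that any two non-separating disks in $H_2$ whose boundaries are $\pi$-horizontal at some levels $t$, $t'$ must be isotopic, but that is the statement to be proved, not a consequence of genericity. The paper proves it by tracking the pants decomposition induced by $\pi_t$ across the crossings of the graphic inside $(a,b)\times S^1$: at each crossing either a single loop is replaced by one meeting it in one or two points, or two loops inside a twice punctured torus component are replaced simultaneously; the latter cannot involve both loops of a defining pair (they never lie in a common twice punctured torus component), so the two defining loops never change at the same crossing, and Lemma~\ref{uniquepairlem} then forces neither to change at all. Some argument of this kind, comparing the pairs realized at different parameters $t$ through the combinatorics of the graphic, is needed to close your third step; ``there is essentially one such family'' is an assertion, not a proof.
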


We should emphasize that there are essentially two parts to the statement: first that for some $t$, the level sets of $\pi_t$ determine a disjoint pair of disks and second that if $\pi_t$ and $\pi_{t'}$ determine disjoint pairs, then they determine the same disjoint pair.

\begin{proof}
Let $C_1 \subset [0,1]$ be the set of points $t \in [0,1]$ such that an essential level loop of $\pi_t$ bounds a disk in $f^{-1}([0,t])$.  Similarly, let $C_2$ be the set of points where an essential level loop of $\pi_t$ bounds a disk in $f^{-1}([t,1])$.  We first will show that $C_1 \cap C_2$ is a non-empty, connected, open interval $(a,b) \subset [0,1]$, implying that the level loops of $\pi_t$ for $t \in (a,b)$ contain the boundaries of a defining pair.

For any $\varepsilon > 0$, the set $f([\ell_1 + \varepsilon, \ell_2 - \varepsilon]) \subset M$ is foliated by level surfaces of $f$ and its complement is a pair of handlebodies.  Thus there is a sweep-out $f' : M \rightarrow [0,1]$ that agrees with $f$ on the interval $[\ell_1 + \varepsilon, \ell_2 - \varepsilon]$.  (Each level set of $f'$ is isotopic to $\Sigma$.)  Define the sets $C'_1, C'_2 \subset [0,1]$ as the values of $t$ for which the level set of $f'$ intersects a level set of $\pi$ in a loop bounding a disk to one side of $t$ or the other.

Bachman and Schleimer~\cite[Claim 6.7]{bachscl} showed that for a sweep-out $f'$ of a surface bundle, the set $C'_1$ is of the form $[0,b)$ for some $b$ and $C'_2$ is of the form $(a,0]$ for some $a$.  Thus if $a < b$ then $C'_1 \cap C'_2$ is an open interval $(a,b)$.  Because $f$ agrees with $f'$ on $[\ell_1 + \varepsilon, \ell_2 - \varepsilon]$ for every $\varepsilon > 0$, the intersection of $C_1 \cap C_2$ with $(\ell_1,\ell_2)$ is a (possibly empty) open interval.

Note that if for some $t$ and $s$ there are regular loops of $\pi_t(s)$ that are essential in $f_t$ and trivial in $\pi^{-1}(s)$ then $t$ is in either $C_1$ or $C_2$.  Cooper and Scharlemann~\cite{sch:solv} showed that for a torus bundle $M$ with bundle map $\pi$, if there is a surface $S \subset M$ such that $\pi|_S$ is Morse and every regular level loop is either trivial in both surfaces or essential in both surfaces then $S$ is an essential torus.  If $\pi|_S$ is near Morse (i.e. at a crossing of the graphic) and all regular level loops are trivial or essential in both surfaces then $S$ is a strongly irreducible, genus two Heegaard surface.  

Because $\Sigma$ is not a genus two surface, every level surface $f'_t$ must have a level loop that is essential in $f'_t$ but trivial in the appropriate level surface of $\pi$.  Thus $C'_1 \cap C'_2$ is a non-empty open interval.  The set $C_1 \cap C_2$ is the intersection of $C'_1 \cap C'_2$ with 
 $[\ell_1,\ell_2]$ (because $f$ and $f'$ agree on this set) so $C_1 \cap C_2$ is empty if and only if $C_1 \cap (\ell_1,\ell_2)$ or $C_2 \cap (\ell_1,\ell_2)$ is empty.

Assume for contradiction $C_1$ is disjoint from $(\ell_1,\ell_2)$.  Let $t$ be a value between $\ell_1$ and the last crossing before $\ell_1$.  The surface $f_t$ has genus two and $\pi|_{f_t}$ is Morse so there is a regular level $s \in S^1$ such that a loop in $f_t^{-1}(s)$ is essential in $f_t$ and trivial in $\pi^{-1}(s)$.  Thus some essential loop $\gamma \subset f_t^{-1}(s)$ bounds a disk in either $f^{-1}([0,t])$ or $f^{-1}([t,1])$.  

If $\gamma$ bounds a disk in $f^{-1}([0,t])$ then attaching a one-handle at the critical point at time $\ell_1$ does not affect this disk.  Thus $\gamma$ must bound a disk in $f^{-1}([t,1])$.  This contradicts the assumption that for any regular value $t < \ell_1$, $f_t$ is not compressible into $f^{-1}([t,1])$.  Thus $C_1$ must intersect $(\ell_1,\ell_2)$.  A similar argument implies that $C_2$ must intersect $(\ell_1,\ell_2)$.  Thus $C_1 \cap C_2$ is a non-empty, connected, open interval $(a,b)$.

For $t \in (a,b)$, at least one loop in the pair of pants decomposition bounds a disk in $H_1$ and at least one bounds a disk in $H_2$.  Because any two loops in the pants decomposition are disjoint, Lemma~\ref{uniquepairlem} implies that there is exactly one loop bounding a disk in $H_1$ and exactly one bounding a disk in $H_2$.  

At each crossing in $(a,b) \times S^1$, $f_t$ passes through a near-Morse function and the induced pants decomposition changes in one of two ways (See~\cite{HT:rep}): (1) One loop in the pants decomposition may be replaced by a new loop that intersects the original loop in one or two points or (2) two loops that are contained in a twice punctured torus component are simultaneously removed and replaced.  After the change, there are still loops bounding disks in opposite handlebodies.  The second type of move cannot change the two loops that bound a defining pair because two such loops are never contained in a twice punctured torus component.  Because the loops bounding the disks cannot both change simultaneously, Lemma~\ref{uniquepairlem} implies that neither can change.  Thus the defining pair is uniquely determined.
\end{proof}

\section{Stable functions in dimension three}
\label{dim3sect}

We have seen how the graphic defined by a Morse function $f$ for a genus three Heegaard splitting for $T^3$ and a torus bundle map $\pi$ for $T^3$ determine a defining pair of disks for the Heegaard splitting.  An automorphism $\phi$ of the Heegaard splitting will take this defining pair to a new defining pair, determined by the graphic of the original sweep-out $f$ and a new bundle map $\pi \circ \phi$.  If $\phi$ is isotopy trivial then the isotopy determines a family of bundle maps $\{\pi^t\}_{t \in [0,1]}$ such that $\pi^0 = \pi$ and $\pi^1 = \pi \circ \phi$.

Each bundle map $\pi^t$ determines a graphic with $f$.  At each value of $t$ where the graphic is generic, $\pi^t$ determines a pair of defining disks.  Thus the family $\{\pi^t\}$ determines a sequence of defining disks for $(\Sigma,H_1, H_2)$.  If we choose this family carefully, we can understand the sequence of defining disks well enough to find a sequence of automorphisms of $(\Sigma, H_1, H_2)$ taking the original defining pair to each consecutive pair in the sequence.  In this section we will describe how the graphic can change during the isotopy of $\pi$ and in the next section we will show how this corresponds to a sequence of defining disks which suggest a sequence of automorphisms of $(\Sigma, H_1, H_2)$.

In order to understand how the graphic changes, we will consider an isotopy of $f$ rather than an isotopy of $\pi$.  Because $\phi$ is isotopic to the identity, there is a continuous family $\{\phi_t : T^3 \rightarrow T^3\}$ such that  $\phi_0$ is the identity and $\phi_1 = \phi$.  The family of bundle maps defined above is given by $\pi^t = \pi \circ \phi_t$ and the graphic at time $t$ is determined by $f \times (\pi \circ \phi_t)$.  If we compose the stable function with $\phi^{-1}_t$, we find that the graphic is also given by the map $(f \circ \phi^{-1}_t) \times \pi$.

Define $f^t = f \circ \phi^{-1}_t$.  Because $f^t$ is a Morse function for each $t$, there is an open neighborhood $N_t \subset C^\infty(T^3,\mathbf{R})$ (with the Whitney $C^\infty$ topology~\cite{golub}) such that each function in $N$ is isotopic to $f^t$.  Moreover, $N_t$ can be chosen to be convex in $C^\infty(T^3,\mathbf{R})$.  Because the set $\{f^t\} \subset C^\infty(T^3,\mathbf{R})$ is compact, it is covered by a finite subset of the convex open neighborhoods $\{N_t\}$.  Because each neighborhood is convex and consists of Morse functions isotopic to $f$, the path $f^t$ can be replaced with a piecewise-linear path consisting of arcs connecting consecutive functions $g^0,\dots,g^n \in \{f^t\}$ such that each $g^i \in N_i \cap N_{i+1}$ determines a generic graphic with $\pi$ and each arc determines an isotopy of $g^i$ onto $g^{i+1}$.  Thus in order to understand how the graphic changes as $f^t$ changes, we can restrict our attention to straight arcs in $C^\infty(T^3,\mathbf{R})$.

Each intermediate function in the arc from $g^i$ to $g^{i+1}$ is of the form $a g^i + b g^{i+1}$ where $a,b > 0$ and $a + b = 1$.  By scaling this function as in~\cite{stabs}, we can make it of the form $\cos(s)g^i + \sin(s)g^{i+1}$.  Thus we are interested in the graphic defined by the stable function $(\cos(s)g^i + \sin(s)g^{i+1}) \times \pi$.  This is the projection of the map $g^i \times g^{i+1} \times \pi : T^3 \rightarrow \mathbf{R}^1 \times S^1$ onto the annulus $L \times S^1$  where $L \in \mathbf{R}^2$ is the line through the origin with slope $\frac{\cos(s)}{\sin(s)}$.  Thus we can understand the effect of the isotopy on the graphic by looking at projections of the map $g^i \times g^{i+1} \times \pi$.

We can choose the sequence $\{g^i\}$ such that each of these maps $g^i$, $g^{i+1}$, $\pi$, $g^i \times g^{i+1}$, $g^i \times \pi$ and $g^{i+1} \times \pi$ is stable, so each is contained in an open ball of isotopic maps in its respective vector space.  The projections of $\mathbf{R}^2 \times S^1$ into the appropriate subspaces define continuous maps from $C^\infty(T^3, \mathbf{R}^3 \times S^1)$ into the spaces containing the above maps and the preimage of each open neighborhood is open in $C^\infty(T^3, \mathbf{R}^2 \times S^1)$.  Their intersection is an open set containing $g^i \times g^{i+1} \times \pi$.  

Mather~\cite{mather} showed that stable functions between three dimensional manifolds are dense in the Whitney $C^\infty$ topology, so this open neighborhood contains a stable function from $T^3$ into $\mathbf{R}^2 \times S^1$.  This stable function projects to maps isotopic to $g^i$, $g^{i+1}$, $\pi$, $g^i \times g^{i+1}$, $g^i \times \pi$ and $g^{i+1} \times \pi$ so if we replace $g^i$, $g^{i+1}$ and $\pi$ with these isotopic maps, $g^i \times g^{i+1} \times \pi$ will be stable and the rest of the maps will be isotopic to the original maps.

Mather's classification~\cite{mather} of singularities of stable functions between three dimensional manifolds implies that at each point $p$ in $T^3$, some neighborhood $N$ of $p$ can be parameterized and some open ball in $\mathbf{R}^2 \times S^1$ can be parameterized such that $f|_N$ has one of the following forms:

$$f(x,y,z) = (x,y,z),$$
$$f(x,y,z) = (x^2,y,z),$$
$$f(x,y,z) = (xy+x^3,y,z)$$
or
$$f(x,y,z) = (xy + x^2z + x^4,y,z).$$

The first type of point on the list is a regular point and at such a point, the discriminant map from $T(T^3)$ to $T(\mathbf{R}^2 \times S^1)$ is one-to-one.  At the last three types of points, the discriminant map has a one dimensional kernel, so these points are in the discriminant set.  The discriminant set intersects each such neighborhood in an open disk, so the discriminant set of $g_0 \times g_1 \times \pi$ is a compact 2-dimensional submanifold in $T^3$.  

The image of the discriminant set in $\mathbf{R}^2 \times S^1$ is an immersed 2-manifold with ``cusps''.  The cusps in the immersion of the 2-manifold form edges consisting of points with neighborhoods of the third type, and these edges come together at points with neighborhoods of the last type.

\begin{Lem}
\label{graphichangeslem}
Given a Morse function $f : T^3 \rightarrow \mathbf{R}$, and isotopic torus bundle maps $\pi^0$, $\pi^1$ such that $f \times \pi^0$ and $f \times \pi^1$ are stable then the maps $f \times \pi^0$ and $f \times \pi^1$ are related by a sequence of isotopies and moves of the types shown in Figure~\ref{gmovesfig}.
\end{Lem}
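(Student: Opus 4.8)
The plan is to analyze the one-parameter family $f \times \pi^t$ (equivalently $f^t \times \pi$, after the reparametrization carried out in Section~\ref{dim3sect}) as a path in the space of maps $T^3 \to \mathbf{R}^2 \times S^1$, and to identify the codimension-one degeneracies that the graphic passes through generically. Having reduced, as in the text, to straight arcs $\cos(s)g^i + \sin(s)g^{i+1}$, the graphic at parameter $s$ is the projection of the fixed stable map $G = g^i \times g^{i+1} \times \pi : T^3 \to \mathbf{R}^2 \times S^1$ onto the annulus $L_s \times S^1$, where $L_s \subset \mathbf{R}^2$ is the line of slope $\cos(s)/\sin(s)$. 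So the entire family of graphics is obtained by rotating a single projection direction; the moves in Figure~\ref{gmovesfig} should be exactly the events that occur as the projection direction sweeps past a codimension-one position relative to the discriminant 2-manifold $\mathcal{J}(G) \subset T^3$ and its image $G(\mathcal{J}(G))$, the immersed cuspidal 2-manifold in $\mathbf{R}^2 \times S^1$ described just before the statement.

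First I would set up the local picture. By Mather's classification (quoted in the excerpt) every point of $T^3$ has one of the four normal forms, so $G(\mathcal{J}(G))$ is an immersed surface whose only singularities are cuspidal edges (third normal form) meeting at isolated swallowtail points (fourth normal form). Projecting to $L_s \times S^1$ and taking the further coordinate recording the $S^1$-value, the graphic $F(\mathcal{J})$ is the apparent contour of this cuspidal surface under the projection $\mathbf{R}^2 \times S^1 \to L_s \times S^1$. The generic local models of apparent contours of a cuspidal surface under a varying projection are classical (folds of the contour map, cusps of the contour map, and their bifurcations); these are precisely the places where the graphic is non-generic in the sense defined before Lemma~\ref{grapchicdefinesdiskslem}, i.e.\ where two vertices collide on one vertical line. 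The key step is therefore: enumerate the codimension-one transitions of the apparent contour as the projection direction $L_s$ rotates, and check that each matches one of the moves drawn in Figure~\ref{gmovesfig}.

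The transitions to account for are: (a) the projection direction becomes tangent to the discriminant surface along a curve in $\mathcal{J}(G)$ — this creates or cancels a pair of crossings of the graphic (a ``lips/beaks'' type move, or a crossing passing through a vertex); (b) the projection direction aligns with a cuspidal edge of $G(\mathcal{J}(G))$ — this slides a cusp of the graphic past a crossing, or merges/splits cusps; (c) the projection line $L_s$ passes through a swallowtail point's preferred direction — but since swallowtails are isolated in $T^3$ and the family is one-parameter, with a generic choice of the sequence $\{g^i\}$ we can assume the path $s \mapsto L_s$ avoids the finitely many bad directions at each swallowtail, so only (a) and (b) occur, at finitely many values of $s$, one at a time. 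A genericity/transversality argument — using the openness of stability (already invoked in Section~\ref{dim3sect}) together with an elementary dimension count on the space of projection directions — lets me assume all these events are isolated and non-degenerate, so the path of graphics is a concatenation of constant-combinatorial-type pieces separated by the moves in Figure~\ref{gmovesfig}.

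The main obstacle I expect is the bookkeeping in step (b)–(c): verifying that the list in Figure~\ref{gmovesfig} is genuinely \emph{complete}, i.e.\ that no exotic transition is missed. This is the usual danger in ``graphic move'' arguments (cf.\ the flawed higher-genus Goeritz arguments mentioned in the introduction), so the safe route is to ground the enumeration in Mather's normal forms rather than in pictures: near each point of $T^3$ the map $G$ is one of four polynomial normal forms, the projection $\mathbf{R}^2 \times S^1 \to L_s \times S^1$ in suitable coordinates is linear, and the apparent contour of each normal form under a one-parameter family of linear projections through the origin can be computed explicitly. One then checks that each resulting bifurcation diagram is isotopic to one of the moves in Figure~\ref{gmovesfig} (plus the trivial move of an ordinary crossing of the graphic sliding, which changes nothing combinatorially). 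Since $[0,1]$ is compact and, after the finite convex cover from Section~\ref{dim3sect}, only finitely many arcs and hence finitely many such events occur, stringing these local analyses together yields the claimed sequence of isotopies and moves relating $f \times \pi^0$ to $f \times \pi^1$.
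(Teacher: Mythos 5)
Your overall route is the one the paper takes: reduce to straight arcs between functions $g^i$, realize the family of graphics as projections of the single map $g^i \times g^{i+1} \times \pi$ onto $L_s \times S^1$, and enumerate the codimension-one transitions of the resulting contours. However, two steps in your enumeration do not hold up. First, your item (c) is wrong: the event in which the contour passes through a swallowtail point of the discriminant image is cut out by a single equation on the single parameter $s$ (the slope of the tangent plane at that isolated point must equal the current projection slope), so it is a codimension-one event in a one-parameter family and occurs generically at isolated parameter values; moreover the range of projection directions swept is the full quarter-turn between the $g^i$- and $g^{i+1}$-axes and is not something you can shrink by perturbing $g^i$, $g^{i+1}$ or $\pi$. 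Genericity only lets you avoid codimension-two phenomena, so swallowtail-type transitions cannot be perturbed away; they must appear in your list of moves (they are among the cusp-creating moves of Figure~\ref{gmovesfig}), and the paper's proof accounts for the corresponding cusp events directly (via tangencies of level sets of its slope function with the curve of cusp points), rather than discarding them.

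Second, your proposed ``safe route'' for completeness --- computing the apparent contour of each of Mather's four local normal forms under a one-parameter family of linear projections --- is a purely mono-germ analysis, and so it can only certify the local moves (births/deaths of eyes, beaks, swallowtails, i.e.\ the analogues of moves (1)--(3)). It cannot detect the multi-local transitions: tangencies between two fold edges, triple points, and a cusp passing over a fold edge (moves (4)--(6)), since these involve two or three distinct points of $T^3$ whose images coincide in the annulus, which no single-point normal form sees. A separate general-position argument on the immersed image of the discriminant set (multi-jet transversality, or the paper's phrasing that the intermediate homotopy of the immersed contour can be perturbed to be generic) is needed to show these are the only multi-local events. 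The paper's proof splits exactly along this line: it introduces the slope function $s$ on the discriminant surface of $g^i \times g^{i+1} \times \pi$, perturbs it to be Morse, and reads off moves (1)--(3) from critical points and tangencies of its level sets, and then handles moves (4)--(6) by genericity of the homotopy of the immersed image. With your item (c) corrected and a multi-local genericity step added, your argument would close the gap, but as written the completeness claim --- the very point you identify as the danger --- is not established.
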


In fact, the theorem is true for any generic path of maps from a 3-manifold to a 2-manifold, but we do not need to prove it in such generality, so we will stick to the situation described above.
\begin{figure}[htb]
  \begin{center}
  \includegraphics[width=3.5in]{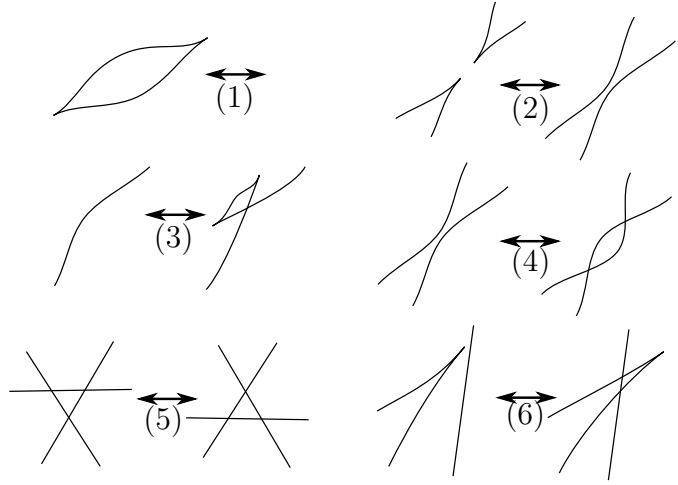}
  \put(-175,142){(1)}
  \put(-63,138){(2)}
  \put(-198,89){(3)}
  \put(-63,80){(4)}
  \put(-202,20){(5)}
  \put(-65,22){(6)}
  \caption{Any two graphics are related by isotopies and a sequence of these six moves.}
  \label{gmovesfig}
  \end{center}
\end{figure}

\begin{proof}
Because $\pi^0$ and $\pi^1$ are isotopic, there is a smooth family of stable functions $\{\pi^t | t \in [0,1]\}$ from $\pi^0$ to $\pi^1$.  As noted above, we can approximate the family of stable functions $f \times \pi^t = f \times (\pi \circ \phi_t)$ by considering the isomorphic stable functions $(f \circ \phi_t^{-1}) \times \pi$, then approximating the path $\{f \times \phi_t^{-1}\}$ by a piecewise-linear path of straight arcs between a sequence of functions $\{g^i \in C^\infty(T^3,\mathbf{R})\}$.  The stable function defined by $\pi$ and a function in the arc from $g^i$ to $g^{i+1}$ can be recovered as a projection of $g^i \times g^{i+1} \times \pi$ onto $L \times S^1$ for some line $L$ in $\mathbf{R}^2$.  Thus in order to understand how the graphic changes with $t$, we must consider projections of $g^i \times g^{i+1} \times \pi$.  Without loss of generality, we will consider projections of $g^0 \times g^1 \times \pi$.

Let $S \subset T^3$ be the discriminant set of the map $F = g^0 \times g^1 \times \pi : T^3 \rightarrow \mathbf{R}^2 \times S^1$.  As noted above, $S$ is a compact, closed surface in $T^3$.  The image of $S$ in $\mathbf{R}^2 \times S^1$ is surface with ``cusps''.  On the complement of the cusps, there is a well defined map from $T_p S$ to $T_{F(p)} (\mathbf{R}^2 \times S^1)$ where $p$ is a non-cusp point in $S$.  If the plane tangent to $F(S)$ at $F(p)$ is not parallel to the plane $\mathbf{R}^2 \times \{\pi(p)\}$ then its intersection with this plane determines a slope in $\mathbf{R}^2$.  For each $p \in S$, let $s(p)$ be this slope.  The function $s$ is defined on the complement in $S$ of the points where $F(S)$ is parallel to the plane $\mathbf{R}^2 \times \{\pi(p)\}$.

By perturbing $F(S)$ slightly (by isotoping $g^0$, $g^1$ and $\pi$), we can ensure that the function $s$ on $S$ is a Morse function defined on the complement in $S$ of a finite number of points.  Locally, we can identify  a patch of $F(S)$ with the graph of a function $\gamma$ from $\mathbf{R}^2$ (in variables $y$, $z$) to $\mathbf{R}$ as follows:  Let $\gamma' : \mathbf{R} \rightarrow \mathbf{R}$ be a smooth function.  For each $a,b$, define $\gamma(a,b) = \gamma'(a) + \int_0^b s\ dy$, where the integral is taken along the arc from $(a,0)$ to $(a,b)$.  The slope of the intersection with the plane  $\mathbf{R}^2 \times \{\pi(p)\}$ is precisely $\frac{d\gamma}{dy}$ so for some $\gamma'$, $F(S)$ is the graph of $\gamma$.

Let $p_t$ be orthogonal projection of $\mathbf{R}^2 \times S^1$ onto the annulus $L \times S^1$ where $L \subset \mathbf{R}^2$ is the line through the origin with slope $\cos(t) / \sin(t)$.  The composition of $F$ with $p_t$ is a function from $M$ to $\mathbf{R}^2$ and the discriminant set of $F \circ p_t$ is the projection of the closure in $S$ of the subset $s^{-1}(t)$.  In order to understand how the graphic changes with $t$, we must understand how $s^{-1}(t)$ maps to the graphic.

Because $s$ is a Morse function, $s^{-1}(t)$ will be a collection of closed loops in $S$.  By recovering $S$ from $s$ as the graph of a function $\gamma$ (defined up to choice of $\gamma'$), one can check that the graphic defined by $F \circ p_t$ is related to the level sets as follows:  

If the image of $s^{-1}(t)$ in $F(S)$ is transverse to the plane $\mathbf{R}^2 \times \{\pi(p)\}$ at a point $p \in S$ then in the projection, $p$ maps into the interior of an edge in the graphic.  If the level set is tangent to the plane at $F(p)$ then $p$ maps to a cusp in the graphic.  Each cusp point of $S$ in the closure of $s^{-1}(t)$ maps to a cusp in the graphic.  A non-cusp point $p$ where $s$ is not defined (because $F(S)$ is tangent to the plane  $\mathbf{R}^2 \times \{\pi(p)\}$) maps to the interior of an edge.  (Such a point is in the closure of $s^{-1}(t)$ for every $t$.)

When $t$ passes through a critical level of $s$, there are two cases to consider: If the critical points has index zero (or two) then right after (right before) the critical point, a component of  $s^{-1}(t)$ will be tangent to the plane $\mathbf{R}^2 \times \{\pi(p)\}$ in exactly two points.  Its image in $F \circ p_t$ will be an eye as in move (1).  Before (after) the critical point, there is no such component so when $t$ passes through the critical level, the eye is created (removed) as in move (1).  Similar reasoning shows that the graphic changes by move (2) if the critical point has index one.  

When $t$ passes through a level where tangencies to the plane $\mathbf{R}^2 \times \pi(p)$ are created or removed, the tangencies are created or removed in pairs, and in the graphic this corresponds to move (3).  At the points where $F(S)$ is tangent to the plane, the graphic does not change at all.  

Generically, there will be no critical point of $s$ in the (one dimensional) set of cusp points in $S$.  Thus the graphic only changes along the set of cusps when a level set of $s$ becomes tangent to a arc of cusp points.  In this case, two cusps in the graphic are created of eliminated as in move (3).

A stable map from a 3-manifold to $\mathbf{R}^2$ can fail to be stable for two reasons: there may be critical points that do not have neighborhoods of the necessary forms, or the image of the disciminant set may be non-generic, i.e. have triple points, tangencies or double points at cusps. We have shown that there is a family of functions from $f \times \pi_0$ to $f \times \pi_1$ such that there are finitely many intermediary functions with unstable neighborhoods, and these correspond to the first three moves.  In between these functions, the graphic changes by some homotopy of the image of the discriminant set.  By perturbing the family $\{f_t\}$ slightly we can ensure that the homotopy is generic, consisting of the a finite sequence of moves (4), (5) and (6).  This completes the proof.
\end{proof}

\section{Graphics and isotopies}
\label{isotopysect}

\begin{proof}[Proof of Lemma~\ref{twistslem1}]
A torus twist determines an automorphism that is isotopy trivial in $T^3$ so it takes the torus determined by a defining pair of disks to an isotopic torus.  If two defining pairs are related by a sequence of torus twists then by induction the induced tori are isotopic.  The majority of the proof of Lemma~\ref{twistslem1} will be devoted to the converse of this.

Let $f : T^3 \rightarrow [0,1]$ be a Morse function with one index zero critical point, three index one critical points followed by three index two critical points, then one index three critical point.  Let $\ell_1$ be the level of the last index one critical point and $\ell_2$ the level of the first index three critical point.  Any level set $f_t = f^{-1}(t)$ for $t \in (\ell_1,\ell_2)$ is a Heegaard surface for $T^3$ and is thus isotopic to $\Sigma$ for the standard (and unique) Heegaard splitting $(\Sigma, H_1, H_2)$.  Moreover, there is a continuous family $\{c_t : \Sigma \rightarrow f_t : t \in (\ell_1,\ell_2)\}$ identifying each level surface with $\Sigma$.

Assume $f$ has the property that for $t < \ell_1$, $f_t$ is not compressible into $f^{-1}([t,1])$.  This will be the case whenever the meridian disk defined by the last index one critical point is not part of a defining pair, so such an $f$ exists.  Similarly, assume that for $t > \ell_2$, $f_t$ is not compressible into $f^{-1}([0,t])$.

For a defining pair $D_1$, $D_2$, there is a bundle map $\pi^0 : T^3 \rightarrow S^1$ such that the defining pair determined by $f \times \pi^0$ and the maps $c_t$ is isotopic to $D_1$ and $D_2$.  Likewise, there is a bundle map $\pi^1$ such that $f \times \pi^1$ determines a defining pair isotopic to the second pair $D'_1$, $D'_2$.  By assumption the defining pairs define isotopic incompressible tori, so $\pi^0$ and $\pi^1$ are isotopic.  Thus Lemma~\ref{graphichangeslem} implies that $f \times \pi^0$ and $f \times \pi^1$ are related by a sequence of the moves shown in Figure~\ref{gmovesfig} and isotopies of the graphic.

Because we chose $f$ so that $f_t$ is incompressible in one direction when $t < \ell_1$ or $t > \ell_2$, $\pi^s$ determines a unique disjoint pair of disks for every value of $s$ for which the graphic for $f \times \pi^s$ is generic.  In particular, Lemma~\ref{grapchicdefinesdiskslem} implies that for every generic $s$, there is a unique maximal interval $(a_s, b_s)$ and a unique defining pair such that for $t \in (a_s, b_s)$, the boundaries of the defining pair are level sets of $\pi^s|_{f_t}$.  The essential level sets of $\pi^s|_{f_t}$ change only when $t$ passes through a value where there is a crossing in $\{t\} \times S^1$.  Thus the circles $\{a_s\} \times S^1$ and $\{b_s\} \times S^1$ pass through crossings in the graphic.

The map $f \times \pi^s$ can fail to be generic for two reasons: the map may fail to be stable (i.e. when it undergoes one of the moves in Figure~\ref{gmovesfig}) or there may be two crossings of the graphic at the same value of $t$.  Generically, each of these will happen for only finitely many values of $s$.

If $f \times \pi^s$ fails to be stable because there is a point in $M$ with a non stable neighborhood (i.e. when the graphic changes by one of the first three types of moves), the graphic changes within a subset $I \times S^1$ for some interval $I \subset [0,1]$.  For moves (1) and (2), there are no crossings in this band, so $I$ is either disjoint from $(a_s,b_s)$ or a proper subset of $(a_s, b_s)$.  For move (3), there is a single crossing in $I \times S^1$ but this crossing cannot be an endpoint of $(a_s,b_s)$ because at least one of the arcs involved corresponds to a pair of pants with a trivial boundary loop in $f_t$. Again $I$ is disjoint from or properly contained in $(a_s,b_s)$.  The crossings created (removed) by moves (4) and (6) cannot be endpoints of $(a_s, b_s)$ because before (after) the move, these crossings don't exist.  

For moves (1),(2),(3),(4) and (6), the move is disjoint from $\{t\} \times S^1$ for some $t \in (a_s, b_s)$ so the level loops of $\pi^s|_{f_t}$ bounding a defining pair do not change.  The disjoint pair with boundaries in level sets of $\pi_s|_{f_t}$ does not change as $s$ passes through $s_0$ so the disjoint pair induced by $\pi_s$ does not change.

The last two cases to deal with are cases in which more than one crossings pass through the same vertical arc of the graphic.  This will only change the induced defining disks if before and after the move, these crossings sit in the vertical loops $a_s \times S^1$ and $b_s \times S^1$.  When the move occurs, this region shrinks down to a single arc, then expands back to an annulus, but with a new pair of loops bounding defining disks as in Figure~\ref{definingchangefig}.
\begin{figure}[htb]
  \begin{center}
  \includegraphics[width=3.5in]{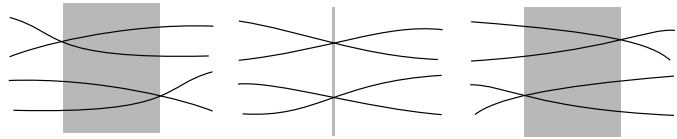}
  \caption{The induced defining disks change when the interval $[a_s,b_s]$ shrinks to a point and the graphic fails to be generic.}
  \label{definingchangefig}
  \end{center}
\end{figure}

To understand how this can happen, we must consider the following situation: for some $t$ and $s$, two level sets of $\pi^s|_{f_t}$ bound a defining pair. Just to the left of $t$ there is a crossing in the graphic that eliminates one of the loops in the pair, and just to the right of $t$ there is a crossing that eliminates the other loop in the pair.

Note that no component of the complement of $\Sigma \setminus (\partial D_1 \cup \partial D_2)$ is a pair of pants.  This implies that the critical points just above and just below one of the loops in the defining pair are distinct from the critical points just above and below the other loop.  Thus the two crossings must involve four distinct edges of the graphic, so move (5) cannot change the pair of defining disks.

The only case in which four distinct edges of the graphic can be involved in the crossing is when $f \times \pi^s$ is stable but not generic because there are two crossings at the same value of $t$.  Each boundary loop of the defining pair sits in a four punctured sphere bounded by essential level loops and each crossing corresponds to replacing the boundary of the defining disk with a new loop in the same four punctured sphere.  The new loops form a new defining pair because after the move, the graphic is generic again.  Thus each boundary loop of the defining pair is replaced by a loop bounding a disk in the opposite handlebody.

Given a pair of defining disks and bounding level sets of some $\pi^s$ as on the left side of Figure~\ref{newdisksfig}, we can find a new pair of defining disks to the right.  A four punctured sphere in $\Sigma$ that contains a loop bounding a disk in $H_1$ contains at most one loop bounding a disk in $H_2$ and vice versa.  Thus the new pair of defining disks shown on the right of the figure is the pair defined by the graphic on the right.
\begin{figure}[htb]
  \begin{center}
  \includegraphics[width=3.5in]{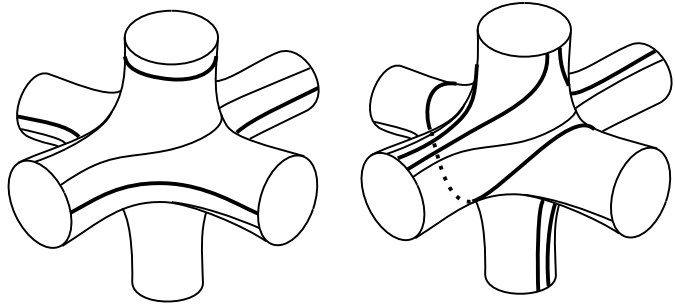}
  \caption{The pairs of defining disks before and after the graphic becomes non-generic.}
  \label{newdisksfig}
  \end{center}
\end{figure}

One can check that the original defining pair shown on the left is taken to the new pair shown on the right by a torus twist along a torus $T$ that intersects $S$ in a single loop and cuts $\Sigma$ into two four punctured spheres.

We have found a sequence of stable functions such that each determines a pair of defining disks for $\Sigma$, the first and last stable functions determine $D_1$,$D_2$ and $D'_1$, $D'_2$, respectively, and the defining disks determined by consecutive stable functions are either isotopic or related by a torus twist along a torus meeting the criteria for Lemma~\ref{twistslem1}.  This defines a sequence of torus twists taking $D_1$,$D_2$ onto $D'_1$,$D'_2$, completing the proof.
\end{proof}

\section{Proof of Theorem~\ref{mainthm}}
\label{mainproofsect}

Recall that $G$ is the subgroup of $Mod(M, \Sigma)$ generated by the elements $\alpha_{ij}$, $\sigma$ and $\tau$ defined above.  To prove Theorem~\ref{mainthm}, we must show that $G$ is equal to $Mod(M,\Sigma)$.

\begin{proof}[Proof of Theorem~\ref{mainthm}]
Let $\phi$ be an element of $Mod(M, \Sigma)$.  If $\phi$ interchanges the handlebodies $H_1$ and $H_2$, then composing $\phi$ with $\sigma$ produces an automorphism that sends $H_1$ to itself.  Since $\sigma$ is an element of $G$, $\sigma \phi$ will be in $G$ if and only if $\phi$ is in $G$.  Thus by replacing $\phi$ with $ \sigma \phi$ if necessary, we will assume that $\phi$ preserves each handlebody.

The image $i(\phi)$ of $\phi$ in $Mod(M)$ is a composition of the images $\{i(\alpha_{ij})\}$.  Thus composing $\phi$ with a sequence of these maps produces an element of $Mod(M,\Sigma)$ that is isotopy trivial in $T^3$.  The maps $\{\alpha_{ij}\}$ are in $G$ so $\phi$ is in $G$ if and only if this composition is in $G$.  We can thus assume $\phi$ is in the kernel of $i$.

Let $T$ be the incompressible torus determined by $D_1^1$ and $D_2^1$.  The disks $\phi(D_1^1)$ and $\phi(D_2^1)$ form a defining pair for $\Sigma$ and determine an incompressible torus $T'$ isotopic to $\phi(T)$.  Because $\phi$ is isotopic to the identity on $T^3$, $T'$ is in fact isotopic to $T$.  Because $D_1^1$, $D_2^1$ and $\phi(D_1^1)$, $\phi(D_2^1)$ determine isotopic tori in $T^3$, Lemma~\ref{twistslem1} implies that there is a sequence of torus twists, along tori that intersect $T$ in a single loop and cut $\Sigma$ into planar pieces, taking $D_1^1$ to $\phi(D_1^1)$ and $D_2^2$ to $\phi(D_2^2)$.  

Any torus twist along a torus that intersects $T$ in a single loop is a conjugate of $\tau$ by elements of $G$, by Lemma~\ref{conjtwistlem}.  Composing $\phi$ by this conjugate of $\tau$ produces an element of $Mod(M,\Sigma)$ that is isotopy trivial on $T^3$ and preserves $D_1^1$ and $D_2^2$.  This conjugate is in $G$ if and only if $\phi$ is in $G$, so we can replace $\phi$ with an element of $Mod(M,\Sigma)$ that is in the kernel of $i$ and takes $D_1^1$ and $D_2^1$ onto themselves.

The complement in $\Sigma$ of $\partial D_1^1 \cup \partial D_1^2$ is a pair of twice punctured tori.  Let $S$ be one of these twice punctured tori.  The automorphisms $\alpha_{23}$, $\alpha_{32}$, $\tau_{12}$ and $\tau_{23}$ take $S$ onto itself and their restrictions to $S$ generate the mapping class group of the twice punctured torus.  (We should only need $\tau_{12}$ and $\tau_{23}$, but for our purposes we can use all four automorphisms.)  Thus there is an element $g$ of $G$ such that $g \circ \phi$ restricts to the identity on $S$.

Each of the loops $\partial D_1^2$ and $\partial D_1^3$ intersects $\partial D_2^1$ in two points, so each loop intersects $S$ in a single properly embedded arc.  Because each loop bounds a disk in $H_1$, it must intersect the twice punctured torus $\Sigma \setminus S$ in a single arc parallel to the arc in $S$. Because $g \circ \phi$ is the identity on $S$, it fixes $\partial D_1^2 \cap S$ and $\partial D_1^3 \cap S$, and therefore also fixes the parallel arcs in $\Sigma \setminus S$.  This implies that $g \circ \phi$ is the identity on $\Sigma \setminus S$ as well as $S$.  Thus $g \circ \phi$ is the identity on all of $\Sigma$ so $\phi = g^{-1} \in G$, completing the proof.
\end{proof}

\bibliographystyle{abbrv}
\bibliography{mcg}

\end{document}